\documentclass[12pt,twoside]{extarticle}
\usepackage[english]{babel}
\usepackage[utf8]{inputenc}
\usepackage{geometry}
\usepackage{fancyhdr}
\usepackage{lipsum}
\usepackage{titlesec}
\usepackage{amsmath}
\usepackage{amssymb}
\usepackage{amsthm}
\usepackage{enumitem}
\usepackage{mathtools}
\usepackage{bigints}
\usepackage{mathrsfs}
\usepackage[]{enumitem}
\usepackage[T1]{fontenc}
\usepackage{xpatch}
\usepackage{theoremref}
\usepackage{comment}
\usepackage{varwidth}
\usepackage{setspace}
\usepackage[dvipsnames]{xcolor}
\usepackage[utf8]{inputenc}
\usepackage[titles]{tocloft}
\usepackage[export]{adjustbox}
\usepackage{graphicx} 
\usepackage{setspace}
\usepackage{microtype}
\usepackage{upgreek}
\usepackage{pgf}
\usepackage{tikz}
\usepackage{caption}
\usepackage{subcaption}
\usepackage{float}
\usepackage[export]{adjustbox}
\usepackage{lipsum}
\usepackage{esint}
\usepackage[misc]{ifsym}
\usepackage{standalone}
\usepgflibrary{arrows}
\usepackage{bbm}
\usepackage{scalerel} 
\usepackage{calrsfs}
\usepackage{esint}
\usepackage{yfonts}
\usepackage{blindtext}
\blindmathtrue
\usepackage{graphicx}
\usepackage[toc,page]{appendix}
\usepackage{authblk}
\usepackage{mathabx}
\usepackage{mlmodern}
\geometry{a4paper,left=25mm,right= 25mm, top=35mm, bottom = 30mm} 
\setlength{\headheight}{14.5pt}
\pagestyle{fancy}
\fancyhead{}
\fancyhead[CO]{\footnotesize{\leftmark}}
\fancyhead[CE]{\sc pietro aldrigo and zolt\'{a}n m. balogh}

\setenumerate{itemsep=0pt,topsep=4pt,parsep=2pt,partopsep=5pt}
\setitemize{itemsep=0pt,topsep=4pt,parsep=2pt,partopsep=5pt}
\renewcommand{\thesection}{\arabic{section}}

\titleformat{\chapter}
    {\Huge \bfseries \centering}
   {}
   {5pt}
    {}
    []
\titleformat{\section}
    {\large \bfseries \centering} 
    {\thesection.} 
    {5pt} 
    {} 
\titleformat{\subsection}
    {\bfseries}
    {\thesubsection.}
    {5pt}
    {}
\titleformat{\subsubsection}
    [runin]
    {\bfseries}
    {\thesubsubsection}
    {5pt}
    {\adddoubleperiod}

    \DeclareMathAlphabet{\pazocal}{OMS}{zplm}{m}{n}
    \DeclareMathAlphabet{\mathcal}{OMS}{cmsy}{m}{n}

\makeatletter
\def\thanks#1{\protected@xdef\@thanks{\@thanks
        \protect\footnotetext{#1}}}
\makeatother

\newcommand{\Rnm}{\ensuremath{{\mathbb{R}^{n+m}}}}
\newcommand{\Rn}{\ensuremath{{\mathbb{R}^{n}}}}
\newcommand{\R}{\ensuremath{{\mathbb{R}}}}
\newcommand{\Rp}{\ensuremath{{\mathbb{R}^{+}}}}
\newcommand{\tc}{\ensuremath{\mathcal{TC}}}
\newcommand{\cd}{\ensuremath{\mathcal{CD}}}

\newcommand{\stc}[1]{\ensuremath{\mathcal{STC}({#1})}}

\newcommand{\calM}{\mathbf{M}}
\newcommand{\Hn}{{\mathscr{H}^{n}}}
\newcommand{\B}{{\mathscr{B}}}
\newcommand{\scrH}{{\mathscr{H}}}
\newcommand{\scrL}{{\mathscr{L}}}
\newcommand{\Hp}{{\mathscr{H}^{n-1}}}
\newcommand{\Ln}{{\mathscr{L}^{n}}}

\newcommand{\M}{\mathfrak{M}}
\newcommand{\m}{\mathfrak{m}}

\renewcommand{\Tilde}{\widetilde}

\renewcommand{\div}{\text{\normalfont div}}

\newcommand{\norm}[1]{\lVert #1 \rVert}

\renewcommand{\ae}{\text{\normalfont a.e.}}

\newcommand{\Lip}{\text{\normalfont Lip}}

\newcommand{\loc}{\text{\normalfont loc}}
\newcommand{\ps}{\text{\footnotesize\normalfont PS}}
\newcommand{\I}{\text{\footnotesize\normalfont I}}
\renewcommand{\S}{\text{\footnotesize\normalfont S}}
\newcommand{\at}{\text{\footnotesize\normalfont TA}}
\newcommand{\gn}{\text{\footnotesize\normalfont GN}}
\newcommand{\ic}{\text{\footnotesize \normalfont IC}}
\newcommand{\be}{\text{\footnotesize \normalfont B}}

\newcommand{\g}{\text{\footnotesize\normalfont G}}
\newcommand{\egn}{\text{\footnotesize\normalfont EGN}}
\newcommand{\ls}{\text{\footnotesize\normalfont LS}}
\newcommand{\textn}[1]{\text{\normalfont #1}}
\renewcommand{\theta}{\vartheta}
\renewcommand{\phi}{\varphi}
\renewcommand{\epsilon}{\varepsilon}

\DeclareSymbolFont{extraup}{U}{zavm}{m}{n}
\DeclareMathSymbol{\varheart}{\mathalpha}{extraup}{86}
\DeclareMathSymbol{\vardiamond}{\mathalpha}{extraup}{87}
\newcommand{\Lcorner}{\mathrel{\makebox[7pt][c]{\rule{.4pt}{5.75pt}\rule{4.5pt}{.4pt}}}}
\renewcommand{\llcorner}{{\Lcorner}}
\newtheoremstyle{defbfnote}
    {15pt} 
    {15pt} 
    {} 
    {} 
    {\bfseries} 
    {.} 
    {.5 em} 
    {\thmname{#1}\thmnumber{ #2}\thmnote{ (#3)}}
\newtheoremstyle{bfnote}
    {15pt} 
    {15pt} 
    {\itshape} 
    {} 
    {\bfseries} 
    {.} 
    {.5 em} 
    {\thmname{#1}\thmnumber{ #2}\thmnote{ (#3)}}
\newtheoremstyle{scesempio}
    {10pt} 
    {10pt} 
    {} 
    {} 
    {\bfseries} 
    {.} 
    {.5 em} 
    {\thmname{#1}\thmnumber{ #2}\thmnote{ (#3)}}
\newtheoremstyle{bfesercizio}
    {20pt} 
    {10pt} 
    {\itshape} 
    {} 
    {\bfseries} 
    {.} 
    {.5 em} 
    {\thmname{#1}\thmnumber{ #2}\thmnote{ (#3)}}
\newtheoremstyle{scsvolgimento}
    {10pt} 
    {30pt} 
    {} 
    {} 
    {\sc} 
    {.} 
    {.5 em} 
    {\thmname{#1}}

\theoremstyle{bfnote}
\newtheorem{theorem}{Theorem}

\newtheorem{corollary}[theorem]{Corollary}
\newtheorem{lemma}[theorem]{Lemma}
\newtheorem{proposition}[theorem]{Proposition}
\theoremstyle{defbfnote}
\newtheorem*{definition}{Definition}
\newtheorem*{remark}{Remark}
\theoremstyle{bfesercizio}

\theoremstyle{scsvolgimento}

\theoremstyle{scesempio}
\newtheorem{example}[theorem]{Example}
\theoremstyle{bfnote}

\title{{\textbf{\textbf{\large \bfseries\uppercase{P\'olya-Szeg\H{o} inequalities on submanifolds with small total mean curvature}}}}}
\author{\sc pietro aldrigo and zolt\'{a}n m. balogh 
\thanks{2020  \emph{Mathematics Subject Classification}:  49Q15 58C05  58C35.}
\thanks{\emph{Key words}: Isoperimetric inequalities on submanifolds, P\'olya-Szeg\H{o} inequality, Sobolev inequality, logarithmic Sobolev inequality.}
\thanks{The authors are supported by the Swiss National Science Foundation, grant number 200021-228012. }
}
\date{}

\begin{document}

\maketitle

\begin{abstract}
    We establish P\'olya-Szeg\H{o}-type inequalities (PSIs) for Sobolev-functions defined on a regular $n$-dimensional submanifold $\Sigma$ (possibly with boundary) of a $(n+m)$-dimensional Euclidean space, under explicit upper bounds of the total mean curvature.
    The $p$-Sobolev and Gagliardo-Nirenberg inequalities, as well as the spectral gap in $W^{1,p}_0(\Sigma)$ are derived as corollaries. Using these PSIs, we prove a sharp $p$-Log-Sobolev inequality for minimal submanifolds in codimension one and two.
    The asymptotic sharpness of both the multiplicative constant appearing in PSIs and the assumption on the total mean curvature bound as $n\to \infty$ is provided.
    A second equivalent version of our PSIs is presented in the appendix of this paper, introducing the notion of model space $(\Rp,\m_{n,K})$ of dimension $n$ and total mean curvature bounded by $K$.
\end{abstract}


\section{Introduction}
    The P\'olya-Szeg\H{o} inequalities (PSIs) serve as a main tool to establish sharp Sobolev, Gagliardo-Nirenberg, and spectral gap inequalities. PSIs rely on symmetrization techniques and the sharp isoperimetric inequality in a Euclidean setting.  This approach uses the Schwartz rearrangement \cite{kesavan2006symmetrization,lieb2001analysis}, a transformation that maps a function to a radially symmetric counterpart while preserving the measure of its upper-level sets. This technique enables the derivation of functional inequalities in Euclidean settings by exploiting the isoperimetric properties of Euclidean open balls \cite{de1958sulla,evans2018measure,lieb2001analysis}.  However, extending this methodology to prove analogous inequalities on submanifolds presents a more intricate challenge, primarily due to problems that are not present in flat Euclidean spaces.  The current paper addresses this by establishing PSIs for Sobolev functions defined on $n$-dimensional submanifolds embedded within a higher-dimensional Euclidean space $\Rnm$.

    The general method for proving PSIs in the Euclidean context was established in \cite{polya1945inequalities}. More recently, similar arguments have been shown to be effective in the context of $\cd(K,N)$-spaces (see \cite{balogh2023sharp, Mondino_2020}). In this work, we apply this method in the framework of Euclidean submanifolds.

    A key aspect of our analysis lies in the role played by the submanifold's total mean curvature. For a smooth $n$-dimensional submanifold $\Sigma \subseteq \Rnm$ (possibly with boundary), the total mean curvature is defined as 
    \begin{align*}
        \tc(\Sigma):= \norm{H}_{L^n(\Sigma)}.
    \end{align*}
    In this expression, $H:\Sigma\to \Rnm$ represents the mean curvature vector field, which is defined as  the trace of the second fundamental form of the submanifold $\Sigma$ \cite{do1992riemannian,lee2006riemannian}, or equivalently, as the first variation of the area functional of the submanifold \cite{simon1984lectures}.  The total mean curvature, as detailed further in section \ref{sec:ProofThPs}, serves as a measure of the cumulative effect of curvature across the entire submanifold. We then proceed to define the class of $n$-dimensional submanifolds of $\Rnm$ with total mean curvature smaller than $K$, where $K>0$ is a fixed constant, as
    \begin{align*}
        \stc{K;n+m,n}:=\left\{\Sigma \subseteq \Rnm\,\,n\textn{-dimensional submanifold }: \tc(\Sigma)\leq K\right\}.
    \end{align*}
    When the dimension $n$ and codimension $m$ are given, we abbreviate this notation and write $\stc{K}$.

    Michael and Simon in \cite{michael1973sobolev} established the existence of a constant $C_n$ that depends solely on $n$ (and not on $m$), such that the isoperimetric inequality
    \begin{align}\label{IsoIneq}
        (\Hn(\Sigma))^{\frac{n-1}{n}}\leq C_n\left(\Hp(\partial \Sigma) + \int_\Sigma |H|\right)
    \end{align}
    hold for every $n$-dimensional submanifold $\Sigma\subseteq \Rnm$. Moreover, they showed that one can choose $C_n \leq 5^n/\omega_n^{1/n}$. Let us define, for each integer $n\geq 2$, the \emph{``isoperimetric constant of $n$-dimensional submanifolds''}, $\ic(n)$, as the infimum of all constants $C_n$ realizing \eqref{IsoIneq}  for every $n$-dimensional submanifold $\Sigma\subseteq \Rnm$ and for every $m\geq 1$. 

    The primary result of this paper is the following theorem.
\begin{theorem}[P\'olya-Szeg\H{o} inequality]\label{th:PS}
    Let $\Sigma\subseteq\Rnm$ be a $n$-dimensional submanifold such that $\Sigma \in \stc{K}$ for  $K<1/\ic(n)$. If $u\in W^{1,p}_0(\Sigma;\Rp)$ for some $1\leq p<\infty$ and $u^*:\Rn\to\Rp$ is its Schwartz rearrangement with respect to $\Ln$, then $u^*\in W^{1,p}(\Rn;\Rp)$ and
        \begin{align}\label{claim:PS}
            \norm{\nabla u^*}_{L^p(\Rn)}\leq \ps(n,K)\norm{\nabla_\Sigma u}_{L^p(\Sigma)},\quad \ps(n,K):=\frac{\ic(n)}{1-\ic(n)\,K}n\omega_n^{1/n}
        \end{align}
\end{theorem}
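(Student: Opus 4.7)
The plan is to follow the classical P\'olya-Szeg\H{o} argument adapted to the submanifold setting, with the Michael-Simon isoperimetric inequality \eqref{IsoIneq} replacing the Euclidean one. After reducing by approximation to smooth functions with compact support in $\Sigma\setminus\partial\Sigma$, I would work with the distribution function $\mu(t):=\Hn(\{u>t\})$, which (by the coarea formula on $\Sigma$) is absolutely continuous and satisfies $-\mu'(t)=\int_{\{u=t\}}|\nabla_\Sigma u|^{-1}\,d\Hp$ for a.e.\ $t$. Because $u^*$ is equimeasurable with $u$, the same $\mu$ controls both sides, so the task reduces to a perimeter comparison of level sets.

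The chain of inequalities I would then build goes as follows. The coarea formula gives
\[ \int_\Sigma |\nabla_\Sigma u|^p\, d\Hn = \int_0^\infty\!\int_{\{u=t\}} |\nabla_\Sigma u|^{p-1}\, d\Hp\, dt, \]
and H\"older applied to $1=|\nabla_\Sigma u|^{(p-1)/p}\cdot|\nabla_\Sigma u|^{-(p-1)/p}$ on the level set yields
\[ \Hp(\{u=t\})^p \leq \Bigl(\int_{\{u=t\}} |\nabla_\Sigma u|^{p-1}\, d\Hp\Bigr)(-\mu'(t))^{p-1}. \]
The key geometric step is to apply \eqref{IsoIneq} to the super-level set $\Sigma_t:=\{u>t\}$, whose boundary in $\Sigma$ is $\{u=t\}$ for a.e.\ $t$ (no $\partial\Sigma$-contribution, thanks to $u\in W^{1,p}_0(\Sigma)$), while bounding the mean-curvature term via H\"older:
\[ \int_{\Sigma_t} |H|\, d\Hn \leq \tc(\Sigma)\, \mu(t)^{(n-1)/n} \leq K\, \mu(t)^{(n-1)/n}. \]
Absorbing this on the left produces
\[ (1-\ic(n)K)\, \mu(t)^{(n-1)/n} \leq \ic(n)\,\Hp(\{u=t\}), \]
which, combined with the identity $\Hp(\{u^*=t\})=n\omega_n^{1/n}\mu(t)^{(n-1)/n}$ for the Schwartz symmetrization, gives $\Hp(\{u^*=t\}) \leq \ps(n,K)\,\Hp(\{u=t\})$.

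To close the loop I would run the coarea/H\"older chain backwards on $u^*$: since $u^*$ is radially non-increasing, $|\nabla u^*|$ is constant on each sphere $\{u^*=t\}$, H\"older becomes an equality, and
\[ \int_{\{u^*=t\}} |\nabla u^*|^{p-1}\, d\Hp = \frac{\Hp(\{u^*=t\})^p}{(-\mu'(t))^{p-1}}. \]
Combining the three previous bounds and integrating in $t$ yields $\norm{\nabla u^*}_{L^p(\Rn)}^p \leq \ps(n,K)^p\,\norm{\nabla_\Sigma u}_{L^p(\Sigma)}^p$, which simultaneously shows $u^*\in W^{1,p}(\Rn)$. The main obstacle I expect is technical rather than conceptual: justifying the smooth-case reductions in the full Sobolev setting. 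One needs an approximation procedure inside $W^{1,p}_0(\Sigma)$, a Sard/coarea argument ensuring that $\{u=t\}$ is $\Hp$-rectifiable with finite measure for a.e.\ $t$, the absolute continuity of $\mu$, and the identification $\Hp(\partial\Sigma_t)=\Hp(\{u=t\})$ for a.e.\ $t$ so that no boundary-of-$\Sigma$ term enters \eqref{IsoIneq}. Once these regularity pieces are in place, the chain above delivers the claim.
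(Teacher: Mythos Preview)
Your proposal is correct and follows essentially the same route as the paper's proof: reduce by approximation to $u\in\Lip_c(\Sigma)$ with $|\nabla_\Sigma u|>0$ a.e.\ on $\{u>0\}$, apply the isoperimetric inequality of Proposition~\ref{prop:isoStc} (which is exactly your absorption of the $\int_{\Sigma_t}|H|$ term via H\"older and the bound $\tc(\Sigma)\le K$) to each super-level set, compare with the Euclidean isoperimetric equality for the radial $u^*$, and close with H\"older on level sets together with the coarea formula. The paper organizes this slightly differently---it treats $p=1$ first via $\phi(t)=\int_{\{u>t\}}|\nabla_\Sigma u|$ and then passes to general $p$ via a difference-quotient version of your H\"older step---but the substance is identical, and the technical regularity points you flag (absolute continuity of $\mu$, strict monotonicity and differentiability of $u^*$) are precisely what the paper handles in Lemmas~\ref{lemma:approx} and~\ref{lemma:techical}.
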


The constraint on the total mean curvature turns out to be a fundamental requirement for the validity of \eqref{claim:PS}, as demonstrated in Example \ref{ex:counter} of Section \ref{ch:Sharpness} and the subsequent remarks. This example also shows the asymptotic sharpness of the bound on $K$ as the dimension $n$ increases to infinity. More precisely, we consider  $\Sigma={\mathbb{S}^2}$,  the two-dimensional sphere in $\R^3$ and note that by Proposition \ref{prop:isoStc}), ${\mathbb{S}^2}\not\in\stc{K}$ for any $K<1/\ic(2)$. In  Example \ref{ex:counter} we show, that  there exists a family of functions $\left\{u_\lambda \in \Lip({\mathbb{S}^2};\Rp) : \lambda \in[1,\infty)\right\}$ such that for every $p>1$ and every $N\in \mathbb{N}$, there exists $\overline{\lambda}(N,p)\geq 1$ such that 
    \begin{align}\label{conterexample:claim}
        \int_{\R^2} |\nabla u^*_\lambda|^p  >  N \int_{\mathbb{S}^2}\left(|\nabla_{\mathbb{S}^2} u_\lambda|^p + |H|^pu_\lambda^p\right) \quad \forall \lambda \geq \overline{\lambda}(N,p).
    \end{align}

Theorem \ref{th:PS} yields several Euclidean-type functional inequalities as corollaries, for submanifolds $\Sigma\in \stc{K}$ with $K < 1/\ic(n)$.  These inequalities, derived from \eqref{claim:PS}, extend known first-order integral inequalities from Euclidean space to this class of submanifolds. 
In particular, in Corollary \ref{cor:MonPrinc} we prove that if a function $v\in \Lip_c(\Rn)$ satisfies 
    \begin{align}
        L\left(\int_\Rn f(v),\int_\Rn g(|\nabla v|)\right) \leq \Lambda\left(\int_\Rn \phi(v),\,\int_\Rn \psi(|\nabla v|)\right),
    \end{align}
    where $f,g,\phi,\psi,L,\Lambda$ possess suitable monotonicity assumptions, and $\Sigma \in \stc{K}$ for some $K<1/\ic(n)$, then every function $u\in \Lip_c(\Sigma)$ whose Schwartz rearrangement $u^*$ coincides with $v$ satisfies
    \begin{align}
        L\left(\int_\Sigma f(u),\int_\Sigma g(\ps(n,K)\nabla_\Sigma u)\right) \leq \Lambda\left(\int_\Sigma \phi(u),\,\int_\Sigma \psi(\ps(n,K) |\nabla_\Sigma u|)\right).
    \end{align}

This corollary implies, as special cases, all classical Sobolev, Gagliardo-Nirenberg and spectral gap inequalities.

It is worth noting that no assumptions on Ricci curvature are required in Theorem \ref{th:PS}; hence, the results apply to Riemannian submanifolds with non-positive Ricci curvature as well, including minimal surfaces. Therefore, these results develop in a different complementary direction compared to \cite{balogh2023sharp, impera2025isoperimetric, Mondino_2020, mondino2021talenti, nobili2022rigidity}.

Brendle in \cite{brendle2021isoperimetric}  established that the sharp isoperimetric constant is equal to the Euclidean one $n^{-1}\omega_n^{-1/n}$, for codimension $m \in \{1, 2\}$ (see also \cite{BrendleEichmair+2023+1+10,brendle2024isoperimetric}). For general $m\geq 1$,  he provided the isoperimetric inequality: 
\begin{gather*}
    (\Hn(\Omega))^{\frac{n-1}{n}}\leq \be(n,m) \left(\Hp(\partial \Omega) + \int_\Omega |H|\right), \\\quad \be(n,m):=
    \begin{cases}
    \frac{1}{n\omega_n^{1/n}}&,\textn{ if } m\in \{1,2\}\\
    \min\left\{\frac{1}{n}\left(\frac{m\omega_m}{(n+m)\omega_{n+m}}\right)^{\frac{1}{n}},\frac{5^n}{\omega_n^{1/n}}\right\}&,\textn{ if }m\geq 3
    \end{cases}
\end{gather*}
for every domain $\Omega \subseteq \Sigma$. Using the explicit constant $\be(n,m)$ instead of $\ic(n)$ in Theorem \ref{th:PS} allows for explicit quantitative estimates of the constant involved in the aforementioned inequalities, although this introduces the dependence on the codimension $m$.
In particular, replacing $\ic(n)$ with $\be(n,m)$, the constant $\ps$ of Theorem \ref{th:PS} becomes 1, for $m\in\{1,2\}$ and $K=0$. 
This allows us to prove the \emph{sharp} version of Sobolev, Gagliardo-Nirenberg and spectral gap inequalities. In particular, as a consequence of the sharp Gagliardo-Nirenberg, we obtain the \emph{sharp} Logarithmic Sobolev inequalities for minimal submanifolds in codimension $m\in\{1,2\}$.

\begin{corollary}[Sharp $p$-Log-Sobolev inequality for minimal submanifolds]\label{th:SLS}
    Let $\Sigma\subseteq \Rnm$ be a $n$-dimensional minimal submanifold, where $n\geq 2$ and $m\in\{1,2\}$. Fix $p\in(1,n)$ and $u\in W^{1,p}_0(\Sigma)$ such that $\norm{u}_{L^p}=1$. Then
    \begin{align}\label{claim:SLS}
        \int_\Sigma |u|^p\ln|u|^p \leq \frac{n}{p}\ln\left(\ls(n,p)\int_\Sigma |\nabla_\Sigma u|^p\right),
    \end{align}
    with
    \begin{align*}
        \ls(n,p):= \frac{p}{n\pi^{p/2}}\left(\frac{p-1}{e}\right)^{p-1}\left(\frac{\Gamma\left(\frac{n}{2}+1\right)}{\Gamma\left(n\frac{p-1}{p}+1\right)}\right)^{\frac{p}{n}}.
    \end{align*}
    The inequality \eqref{claim:SLS} is sharp. In particular, equality is obtained in the case $\Sigma=\Rn$ and $u(x)=Ce^{\frac{1}{s}|x-\overline{x}|^{p/(p-1)}}$ for any $s>0$ $\overline{x}\in \Rn$, where $C>0$ is the normalizing constant. 
\end{corollary}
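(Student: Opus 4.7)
The plan is to combine the Pólya--Szegő inequality of Theorem~\ref{th:PS} (in the sharp form available in codimensions $m\in\{1,2\}$) with the known sharp Euclidean $p$-Log-Sobolev inequality. The key observation is that when $\Sigma$ is minimal the mean-curvature vector vanishes identically, so $\tc(\Sigma)=0$ and therefore $\Sigma \in \stc{0}$. Moreover, by Brendle's sharp isoperimetric inequality \cite{brendle2021isoperimetric}, in codimensions $m\in\{1,2\}$ one may replace $\ic(n)$ by $\be(n,m)=1/(n\omega_n^{1/n})$ in Theorem~\ref{th:PS}. With $K=0$ this gives a multiplicative constant of exactly $1$:
\begin{align*}
    \ps(n,0)\ \longrightarrow\ \be(n,m)\cdot n\omega_n^{1/n}\ =\ 1.
\end{align*}

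Concretely, for $u\in W^{1,p}_0(\Sigma)$ with $\norm{u}_{L^p(\Sigma)}=1$, Theorem~\ref{th:PS} specialized to this setting yields
\begin{align*}
    \norm{\nabla u^*}_{L^p(\Rn)}\ \leq\ \norm{\nabla_\Sigma u}_{L^p(\Sigma)}.
\end{align*}
By construction, the Schwartz rearrangement $u^*$ is equimeasurable with $u$, so
\begin{align*}
    \int_\Rn |u^*|^p\,d\Ln\ =\ \int_\Sigma |u|^p\,d\Hn\ =\ 1,\qquad \int_\Rn |u^*|^p\ln|u^*|^p\,d\Ln\ =\ \int_\Sigma |u|^p\ln|u|^p\,d\Hn,
\end{align*}
and the classical sharp Euclidean $p$-Log-Sobolev inequality of Del Pino--Dolbeault/Gentil applied to $u^*$ gives
\begin{align*}
    \int_\Rn |u^*|^p\ln|u^*|^p\ \leq\ \frac{n}{p}\ln\!\left(\ls(n,p)\int_\Rn |\nabla u^*|^p\right).
\end{align*}
Since $\ln$ is monotone, combining the three displays above yields \eqref{claim:SLS}.

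For the equality statement, the extremal family $u(x)=Ce^{-|x-\overline x|^{p/(p-1)}/s}$ (with the appropriate sign, normalized to have unit $L^p$-mass) saturates the Euclidean sharp inequality, as established by Del Pino--Dolbeault. Taking $\Sigma=\Rn$ regarded as a (trivially minimal) affine $n$-plane in $\Rnm$ realizes the equality case in \eqref{claim:SLS}, certifying that the constant $\ls(n,p)$ cannot be improved. The only real content is the reduction to the Euclidean inequality via the sharp PSI; the potential obstacle is verifying that, upon substituting $\be(n,m)$ for $\ic(n)$ in Theorem~\ref{th:PS}, the minimality assumption truly gives multiplicative constant $1$ and not merely a numerical factor close to $1$, which is why working in codimensions $m\in\{1,2\}$ (where Brendle's sharp isoperimetric constant coincides with the Euclidean one) is essential.
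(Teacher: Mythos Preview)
Your proposal is correct and follows essentially the same strategy as the paper: use Brendle's sharp isoperimetric constant in codimension $m\in\{1,2\}$ together with $K=0$ (minimality) to get $\ps=1$ in Theorem~\ref{th:PS}, then reduce to the sharp Euclidean $p$-Log-Sobolev of Del Pino--Dolbeault. The paper's proof additionally routes the reduction through the sharp Gagliardo--Nirenberg inequality \eqref{eq:egnWithB} (since Del Pino--Dolbeault obtain log-Sobolev as a limit of GN), whereas you apply the Euclidean log-Sobolev directly to $u^*$ via equimeasurability; this is a minor streamlining rather than a different argument.
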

Compare this Corollary \ref{th:SLS} with Theorem 1.2 of \cite{balogh2024logarithmic}. In the latter, the second author and Krist\'aly show a $p$-Log-Sobolev inequality with a different constant for minimal submanifolds in any codimension and, more importantly, for any $p\geq 2$ using optimal mass transport. 

Observe that \eqref{claim:PS}, \eqref{monPrinc:Claim} and \eqref{claim:SLS} are all of \emph{``Euclidean type''}, in the sense that no additional terms involving the integral of the mean curvature appear. Instead, only the upper bound $K$ of the total mean curvature of $\Sigma$ influences the \emph{multiplicative} constant $\ps$ in \eqref{claim:PS}. Compare this with the inequalities obtained in \cite{balogh2024logarithmic,BrendleEichmair+2023+1+10,brendle2024isoperimetric,michael1973sobolev}.

We now present the structure of this paper. 

In Section \ref{sec:Prelim}, we recall the definition of Schwartz rearrangement for general measure spaces and the basic properties that will be used throughout this work. Particular attention is devoted to the properties of the Schwartz rearrangement for functions defined on a $n$-dimensional submanifold. The proof of Theorem \ref{th:PS} is presented in Section \ref{sec:ProofThPs}. Section \ref{sec:Cor} contains several consequences of Theorem \ref{th:PS}, such as a monotonicity principle for integral inequalities involving the first-order derivatives for functions defined on submanifolds (Corollary \ref{cor:MonPrinc}), which in turn implies the $p$-Sobolev inequality (Corollary \ref{cor:pSobStc}), Gagliardo-Nirenberg inequality (Corollary \ref{cor:GN_Stc}) and the Spectral Gap (Corollary \ref{cor:SGStc}). At the end of this section, we provide explicit estimates of the constants involved in these inequalities replacing $\ic(n)$ with $\be(n,m)$ and prove the asymptotic sharpness of the constant $\ps$ in codimension $m=1$ and $m=2$ as $n\to\infty$. Using these final remarks, we are able to prove Corollary \ref{th:SLS}. In Section \ref{ch:Sharpness} we prove, through an explicit example, that Theorem \ref{th:PS} does not hold when $\Sigma$ is a compact submanifold without boundary and, as a consequence of this fact, that the total mean curvature bound is asymptotically sharp as the dimension of the manifolds $n$ grows to infinity. Finally, in the appendix, we consider the concept of model space in the spirit of Milman \cite{milman2011sharp}. This involves constructing a one-dimensional space $(\Rp,\m_{n,K})$, where $\m_{n,K}$ is a measure dependent solely on the dimension $n$ and the bound $K$ on the total mean curvature, which captures the essential measure-theoretical properties of a $n$-dimensional submanifold with total curvature bounded by $K$. This allows us to reformulate our main theorem within this simpler setting, providing an equivalent but arguably more tractable version of the P\'olya-Szeg\H{o} inequalities.

\section{Preliminary results and notation}\label{sec:Prelim}
\subsection{The Schwartz rearrangement}\label{subsection:Schwartz}

 For every $N\geq 2$, $\calM(\R^N)$ denotes the family of Radon measures $\m:\B(\R^N)\to[0,\infty]$ that are absolutely continuous with respect to the Lebesgue measure $\scrL^N$ on $\R^N$, with positive density and $\m(\R^N)=\infty$. In the case $N=1$, we no longer consider measures defined on the whole real line, but only on the non-negative half-line $\Rp:=[0,\infty)$. That is, we denote by $\calM(\Rp)$ the family of Borel measures $\m:\B(\Rp)\to[0,\infty]$ that are absolutely continuous with respect to the restriction of the Lebesgue measure $\scrL\llcorner\Rp$, with positive density and $\m(\Rp)=\infty$. With a slight abuse of notation, for every integer $N \geq 1$, we write $\calM(\mathbb{R}^N)$, where for $N=1$, $\calM(\mathbb{R}^+)$ is understood.

{ \renewcommand{\Rn}{{\mathbb{R}^N}}
    \renewcommand{\Ln}{{\mathscr{L}^N}}

\begin{definition}[Schwartz rearrangement]
    Let $(X,\M)$ be a measure space, $N\geq 1$ and integer and $\m\in \calM(\Rn)$. A $\M$-measurable function $f:X\to \Rp$ is $\M$-rearrangeable if $\M(\{f>t\})<\infty$ for every $t>0$. The $\m$-Scwhartz rearrangement of a rearrangeable function $f:X\to\Rp$ is the (unique) function $f^{*,\m}:\Rn\to\Rp$ such that
    \begin{enumerate}[label = (\Roman*)]
        \item\label{radial} $f^{*,\m}(\xi)=\rho(|\xi|)$ for some non-increasing function $\rho:\Rp\to\Rp$;
        \item $\m(\{f^{*,\m}>t\})=\M(\{f>t\})$ for every $t>0$.
    \end{enumerate}
    If $E\subseteq X$ is measurable and $\M(E)<\infty$, then $E^{*,\m}\subseteq\Rn$ is the subset such that $\chi_{E^{*,\m}}= (\chi_E)^{*,\m}$.
\end{definition}

Existence of the $\m$-Schwartz symmetrization follows from $\m(\Rn)=\infty$ (or $\m(\Rp)=\infty$, if $N=1$) and absolute continuity of $\m$ with respect to the Lebesgue measure; uniqueness is a consequence of $\frac{d\m}{d\scrL^n}>0$ $\scrL^N$-a.e. in $\Rn$.

Observe that property \ref{radial} in the above definition implies that the upper level sets $\{f^{*,\m}>t\}\subseteq\Rn$ are Euclidean balls centered in the origin for every $t>0$.

When the choice of the integer $N$ is clear and the measure $\m=\scrL^{N}$ is the Lebesgue measure in $\Rn$, we write $f^*$ and $E^*$ instead of $f^{*,\scrL^N}$ and $E^{*,\scrL^{N}}$ respectively.

The following proposition summarizes the properties of the Schwartz rearrangement required for the results presented in this paper. Kesavan  in \cite{kesavan2006symmetrization} provides proofs of these properties in the Euclidean setting; these can be straightforwardly adapted to our context.

\begin{proposition}\label{prop:prop}
    Let $(X,\M)$ be a measure space and fix $\m\in \calM(\R^N)$. Consider a rearrangeable function $f:X\to \Rp$ and let $f^{*,\m}:\Rn\to\Rp$ be its $\m$-Schwartz rearrangement. Then:
    \begin{enumerate}[label = (\roman*)]
        \item \label{prop:norms} For every $1\leq p < \infty$ the equality $\norm{f}_{L^p(X,\M)}=\norm{f^{*,\m}}_{L^p(\Rn,\m)}$ holds.
        \item \label{prop:notproved} The $\m$-Schwartz rearrangement induces a continuous operator $*:L^p(X,\M)\to L^p(\Rn,\m)$.
    \end{enumerate}
\end{proposition}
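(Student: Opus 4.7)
Both parts hinge on the layer-cake representation together with the defining equimeasurability $\M(\{f > t\}) = \m(\{f^{*,\m} > t\})$ valid for every $t > 0$.

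For part \ref{prop:norms}, my plan is to apply the layer-cake formula
\begin{align*}
\int_Y h^p \, d\nu = \int_0^\infty p\, t^{p-1} \nu(\{h > t\}) \, dt
\end{align*}
separately to $(X, \M, f)$ and to $(\Rn, \m, f^{*,\m})$; by equimeasurability the two integrands coincide pointwise in $t$, so the $L^p$-norms agree.

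For part \ref{prop:notproved}, the plan is to establish the stronger non-expansivity inequality
\begin{align*}
\norm{f^{*,\m} - g^{*,\m}}_{L^p(\Rn, \m)} \leq \norm{f - g}_{L^p(X, \M)},
\end{align*}
from which $L^p$-continuity of the rearrangement follows for free. For $p = 1$ the argument is transparent: the pointwise identity $|f - g| = \int_0^\infty \chi_{\{f > t\} \triangle \{g > t\}} \, dt$ (valid for $f,g \geq 0$) together with Tonelli yields $\norm{f - g}_{L^1(X, \M)} = \int_0^\infty \M(\{f > t\} \triangle \{g > t\}) \, dt$, and analogously for $f^{*,\m}, g^{*,\m}$. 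Since the upper level sets $\{f^{*,\m} > t\}$ and $\{g^{*,\m} > t\}$ are concentric Euclidean balls, their symmetric difference is an annulus of $\m$-measure exactly $|\M(\{f > t\}) - \M(\{g > t\})| \leq \M(\{f > t\} \triangle \{g > t\})$, which closes the estimate.

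For general $p \in [1,\infty)$, I would pass to the classical Euclidean framework by a radial reparametrization: the map $r \mapsto \m(B_r(0))$ is, by the assumptions built into $\calM(\Rn)$, a continuous strictly increasing bijection $\Rp \to \Rp$, and it identifies the $\m$-Schwartz rearrangement on $\Rn$ with the standard monotone decreasing rearrangement on $(\Rp, \scrL)$. The desired $L^p$-contraction then reduces to the classical one-dimensional fact treated in \cite{kesavan2006symmetrization}. The main obstacle is verifying that this reduction is uniform in $\m$, but since it depends only on absolute continuity, strict positivity of the density, and infinite total mass — the stipulated properties of $\calM(\Rn)$ — the classical arguments transfer without essential modification.
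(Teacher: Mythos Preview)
The paper does not supply its own proof of this proposition; it simply refers to Kesavan \cite{kesavan2006symmetrization} and remarks that the Euclidean arguments there adapt straightforwardly to the present $\m$-weighted setting. Your plan is correct and is precisely such an adaptation: layer-cake plus equimeasurability for \ref{prop:norms}, and the non-expansivity inequality for \ref{prop:notproved}, reduced via the bijection $r\mapsto\m(B_r)$ to the classical one-dimensional decreasing rearrangement. The only point worth making explicit is that for a \emph{radial} integrand $H(|\xi|)$ one has $\int_{\Rn} H(|\xi|)\,d\m = \int_0^\infty H(r)\,\Phi'(r)\,dr$ with $\Phi(r)=\m(B_r)$, regardless of whether $\m$ itself is radially symmetric (this follows from polar coordinates applied to the density $d\m/d\scrL^N$); once this is observed, the identification $\norm{f^{*,\m}-g^{*,\m}}_{L^p(\Rn,\m)}=\norm{f^{\#}-g^{\#}}_{L^p(\Rp,\scrL)}$ is immediate and the classical contraction from \cite{kesavan2006symmetrization} applies verbatim.
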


\subsection{Properties of the rearrangement on submanifolds}\label{ch:Submanifolds}

If $\Sigma$ is a smooth $n$-dimensional submanifold with boundary of $\Rnm$, then $\Sigma$ has a natural structure of measure space $(\Sigma, \Hn\llcorner \Sigma)$, where $\Hn\llcorner \Sigma$ is the restriction of the Hausdorff $n$-dimensional measure $\scrH^n$ to $\Sigma$. Unless explicitly stated otherwise, we assume throughout this work that the measure on  $\Sigma$ is $\Hn\llcorner \Sigma$. As in the Euclidean case, we abbreviate $\int_\Omega u\,d{\mathscr{H}^n\llcorner{\Sigma}}$ with $\int_\Omega u$ for every integrable function $u$ and measurable subset $\Omega\subseteq \Sigma$. Notice that when $\Sigma$ has smooth boundary $\partial \Sigma$, then $\partial \Sigma$ is a $(n-1)$-dimensional submanifold itself, hence we always endow it with the measure $\scrH^{n-1}\llcorner\partial \Sigma$ and use the same notation for integration.

Whenever $\Sigma$ is a smooth $n$-dimensional submanifold with boundary, its smooth structure allows us to consider the gradients of smooth function $u$ defined in $\Sigma$, namely 
\begin{align}\label{def:gradient}
    \nabla_\Sigma u(x):=\sum_{j=1}^n\partial_{w_j}u(x)w_j\in T_x\Sigma,
\end{align}
where $\{w_1,...,w_n\}$ is any orthonormal basis of the tangent space $T_x\Sigma$. We denote by $H:\Sigma \to \Rnm$ the mean curvature of $\Sigma$.

Fix $1\leq p <\infty$. We say that a function $u\in L^p(\Sigma)$ is \emph{$p$-Sobolev} and write $u\in W^{1,p}(\Sigma)$ if there exists a vector valued function $\nabla_\Sigma u\in L^p(\Sigma)$, called \emph{weak gradient of $u$} such that
    \begin{align*}
        \int_\Sigma u\,\div_\Sigma X = -\int_\Sigma X\cdot\nabla_\Sigma u\quad \forall X\in \mathfrak{X}_0(\Sigma),
    \end{align*}
    where $\mathfrak{X}_0(\Sigma)$ is the set of tangent vector  fields on $\Sigma$ that vanish on $\partial \Sigma$.
    We then define the \emph{$p$-Sobolev norm} $\norm{\cdot}_{W^{1,p}}$ in $W^{1,p}(\Sigma)$ to be 
    \begin{align*}
        \norm{u}_{W^{1,p}}:=\norm{u}_{L^p} + \norm{\nabla_\Sigma u}_{L^p}\quad \forall u\in W^{1,p}(\Sigma).
    \end{align*}
    Finally, the family $W^{1,p}_0(\Sigma)$ of \emph{$p$-Sobolev functions vanishing at $\partial \Sigma$} is defined as the topological closure of $C^\infty_c(\Sigma)\subseteq W^{1,p}(\Sigma)$ with respect to the norm $\norm{\cdot}_{W^{1,p}}$.

\begin{lemma}\label{lemma:approx}
    Consider a smooth $n$-dimensional submanifold with boundary $\Sigma\subseteq \Rnm$ and let $\Omega\subseteq \Sigma$ be an open subset with $\scrH^n(\Omega)<\infty$. Fix a real number $p\geq 1$. For any non-negative function $u\in\Lip_c(\Omega;\Rp)$ with $\int_\Sigma |\nabla_\Sigma u|^p<\infty$ there exists a sequence $(u_j)\subseteq\Lip_c(\Omega;\Rp)$ with $|\nabla_\Sigma u_j|>0$ $\scrH^n$-a.e. in $\{u_j>0\}$ and such that $u_j\to u$ in $W^{1,p}(\Sigma)$. In particular, 
    \begin{align*}
        \lim_{j\to\infty}\int_\Sigma |\nabla_\Sigma u_j|^p = \int_\Sigma |\nabla_\Sigma u|^p.
    \end{align*}
\end{lemma}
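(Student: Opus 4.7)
The plan is to use a multiplicative perturbation. For a generically chosen $a\in\Rnm$ and a suitable sequence $\epsilon_j\downarrow 0$, set
\[
u_j(x):=u(x)\bigl(1+\epsilon_j\, a\cdot x\bigr).
\]
Since $u\geq 0$ has compact support $K\subseteq\Omega$, the factor $1+\epsilon_j\, a\cdot x$ is positive on $K$ for $\epsilon_j$ sufficiently small, so $u_j\in\Lip_c(\Omega;\Rp)$, $u_j\geq 0$, $\{u_j>0\}=\{u>0\}$, and by the Leibniz rule
\[
\nabla_\Sigma u_j=(1+\epsilon_j\, a\cdot x)\,\nabla_\Sigma u+\epsilon_j\, u\, a^T,
\]
where $a^T(x)$ denotes the orthogonal projection of $a$ onto $T_x\Sigma$. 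Elementary bounds then yield $\norm{u_j-u}_{W^{1,p}(\Sigma)}\leq C\epsilon_j$ for a constant depending on $u$, $a$ and the diameter of $K$, so $u_j\to u$ in $W^{1,p}$ and the concluding statement on gradient norms follows.

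Two measure-theoretic facts make this work. First, I would choose $a$ so that $a^T\neq 0$ $\Hn$-a.e.\ on $\Omega$. For each $x$ the fiber $(T_x\Sigma)^\perp$ has $\scrL^{n+m}$-measure zero since it is an $m$-dimensional subspace of $\Rnm$ with $m<n+m$. Applying Fubini to the measurable set $E:=\{(x,a)\in\Omega\times\Rnm: a\perp T_x\Sigma\}$ (measurable because locally a smooth orthonormal tangent frame exists) with the product measure $(\Hn\llcorner\Omega)\otimes\scrL^{n+m}$ yields $\Hn(\{x\in\Omega:a\perp T_x\Sigma\})=0$ for $\scrL^{n+m}$-a.e.\ $a$; we pick any such $a$.

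Second, I would show that for all but countably many $\epsilon_j\in(0,\epsilon_0)$ the set $\{\nabla_\Sigma u_j=0\}\cap\{u_j>0\}$ is $\Hn$-null. Wherever $a^T=0$ the vanishing condition reduces (for $\epsilon_j$ small) to $\nabla_\Sigma u=0$, and these points already form an $\Hn$-null set by the choice of $a$. At points where $a^T(x)\neq 0$, cancellation forces $\nabla_\Sigma u$ to be parallel to $a^T$; setting $\lambda(x):=\nabla_\Sigma u\cdot a^T/|a^T|^2$, the vanishing condition becomes
\[
\lambda(x)+\epsilon_j\bigl(\lambda(x)\,a\cdot x+u(x)\bigr)=0,
\]
which for each fixed $x$ admits at most one solution $\epsilon(x):=-\lambda(x)/(\lambda(x)\,a\cdot x+u(x))$ (if the denominator vanishes then $\lambda(x)=0$ is forced and there is no solution at all). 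The bad set at parameter $\epsilon_j$ is therefore contained in a level set of the measurable function $\epsilon(\cdot)$; since $\Hn(\Omega)<\infty$, only countably many of its level sets have positive $\Hn$-measure, so selecting $\epsilon_j$ outside this countable set produces the required sequence.

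The main obstacle is precisely this last measure-theoretic argument, in particular the careful separation of the parallel and non-parallel regimes and the handling of degeneracies in the denominator $\lambda\,a\cdot x+u$. Everything else — non-negativity, Lipschitz regularity, compact support in $\Omega$, and $W^{1,p}$-convergence — follows readily from the explicit definition of $u_j$.
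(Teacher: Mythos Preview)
Your argument is correct. The paper does not give its own proof of this lemma; it simply refers the reader to \cite[Lemma~3.6]{Mondino_2020}, which establishes the analogous approximation in the much more general setting of (essentially non-branching) metric measure spaces satisfying a curvature--dimension condition. Your construction, by contrast, exploits the ambient Euclidean structure directly: the multiplicative perturbation by the affine factor $1+\epsilon_j\,a\cdot x$ injects an explicit tangential contribution $\epsilon_j\,u\,a^T$ into the gradient, and the Fubini argument for the choice of $a$ together with the level-set counting for the choice of $\epsilon_j$ is clean and elementary. The trade-off is scope: your proof is self-contained and short, but it is tied to embedded submanifolds of $\Rnm$ (one needs the linear coordinate $x\mapsto a\cdot x$ and the smooth tangent bundle), whereas the cited reference works intrinsically and therefore covers the non-smooth situations to which the P\'olya--Szeg\H{o} machinery has also been extended. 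For the purposes of this paper either route suffices; yours has the advantage of requiring no external reference.
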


We refer \cite[Lemma 3.6]{Mondino_2020} for a proof of Lemma \ref{lemma:approx} in a more general setting. 

Recall the definition of $\calM(\R^N)$ given in the beginning of this section.

\begin{lemma}\label{lemma:techical}
    Consider the measure spaces $(\Sigma,{\mathscr{H}^n\llcorner{\Sigma}})$ and $(\R^N,\m)$, where $\Sigma\subseteq \Rnm$ is a smooth $n$-dimensional submanifold with boundary and $\m\ = f\scrL^N \in \calM(\R^N)$. Let $u\in W^{1,1}_0(\Sigma;\Rp)$ such that $|\nabla_\Sigma u|>0$ $\scrH^n$-a.e. in $\{u>0\}$. 
    \begin{enumerate}[label = (\roman*)]
        \item\label{lemma:technical_i} The function $t\mapsto \Hn(\{u>t\})$ is absolutely continuous, with derivative 
        \begin{align*}
            \frac{d}{dt}\Hn(\{u>t\}) = -\int_{\{u=t\}}\frac{1}{|\nabla_\Sigma u|}<0 \quad \ae\,\,t\in(0,\infty).
        \end{align*}
        \item If $u^{*,\m}:\R^N\to\Rp$ is the $\m$-Schwartz symmetrization of $u$ and  $\rho:\Rp\to\Rp$ such that $u^{*,\m}=\rho(|\cdot|)$, then $\rho$ is strictly decreasing and absolutely continuous.
    \end{enumerate}
\end{lemma}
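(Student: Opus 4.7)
For Part~(i), I would rely on the coarea formula for Sobolev functions on the Riemannian submanifold $\Sigma$, which states
\begin{align*}
    \int_\Sigma g\,|\nabla_\Sigma u|\,d\Hn \,=\, \int_0^\infty \int_{\{u=t\}} g \, d\Hp\,dt
\end{align*}
for every $\Hn$-measurable $g\ge 0$. The hypothesis $|\nabla_\Sigma u|>0$ $\Hn$-a.e.\ on $\{u>0\}$ makes the test function $g:=\chi_{\{u>s\}}/|\nabla_\Sigma u|$ a.e.\ finite, and the formula collapses to
\begin{align*}
   V(s) \,:=\, \Hn(\{u>s\}) \,=\, \int_s^\infty h(t)\,dt, \qquad h(t)\,:=\,\int_{\{u=t\}}\frac{1}{|\nabla_\Sigma u|}\,d\Hp.
\end{align*}
Since $u\in L^1(\Sigma)$ forces $V(s)<\infty$ for every $s>0$, $h$ is locally integrable on $(0,\infty)$; the fundamental theorem of calculus then delivers absolute continuity of $V$ on $[s_0,\infty)$ for every $s_0>0$, together with the derivative formula $V'(s)=-h(s)$ a.e. For the strict negativity, if $V$ were constant on some $[s_1,s_2]$ contained in the essential range of $u$, then $\Hn(\{s_1<u\le s_2\})=0$ would force $u$ to skip the values of $(s_1,s_2)$ on a set of positive measure, incompatible with $u\in W^{1,1}_0(\Sigma)$ and $|\nabla_\Sigma u|>0$ on $\{u>0\}$.

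For Part~(ii), I would reduce matters to a one-dimensional inversion via the cumulative radial measure $F(r):=\m(B(0,r))$. Because $\m=f\,\scrL^N$ (or $f\,\scrL\llcorner\Rp$ when $N=1$) with $f>0$ a.e.\ and $\m(\R^N)=\infty$, Fubini on spherical shells yields $F'>0$ a.e., so $F$ is a strictly increasing AC bijection of $[0,\infty)$ onto itself and $F^{-1}$ is likewise AC and strictly increasing. Since $\rho$ is non-increasing, $\{u^{*,\m}>t\}=B(0,r(t))$ with $r(t):=\sup\{s\ge 0:\rho(s)>t\}$, and property~(II) of the Schwartz rearrangement becomes $F(r(t))=V(t)$, i.e.\ $r(t)=F^{-1}(V(t))$. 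Part~(i) gives $V$ strictly decreasing and continuous on $(0,\textn{ess sup}\,u)$, so $r$ is a strictly decreasing continuous bijection onto $(0,F^{-1}(V(0^+)))$ and its inverse $\rho$ is strictly decreasing and continuous.

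To upgrade $\rho$ to absolutely continuous I would proceed in two steps. First, $r=F^{-1}\circ V$ is AC because the composition of an AC function with a strictly monotone AC one is AC: the $\epsilon$-$\delta$ definition transfers, exploiting that $V$ maps disjoint intervals $(a_i,b_i)$ to disjoint intervals of total length $\sum|V(b_i)-V(a_i)|$, controllable by the AC of $V$. Second, $r$ is strictly monotone AC with $r'\neq 0$ a.e., and the area formula $|r(E)|=\int_E|r'|\,ds$ combined with the Luzin~$(N)$ property shared by all AC functions shows that $r$ sends null sets to null sets \emph{and} sets of positive measure to sets of positive measure; the latter property forces $\rho=r^{-1}$ to satisfy Luzin~$(N)$, and a monotone continuous function with the $(N)$-property is AC. The two subtle points are the justification of strict negativity in Part~(i), which uses both the $W^{1,1}_0$ regularity of $u$ and the full strength of the hypothesis on $|\nabla_\Sigma u|$, and the composition-and-inversion preservation of AC in Part~(ii); everything else is the coarea formula and standard real analysis.
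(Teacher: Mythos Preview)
Your proposal is correct and follows essentially the same route as the paper: the coarea formula for Part~(i), and for Part~(ii) the radial cumulative measure $F(r)=\m(B_r)$ (which is exactly the paper's $\Psi$), the identity $r=F^{-1}\circ V$ (the paper's $\tau=\Psi^{-1}(\Hn(\{u>t\}))$), and the observation that $\rho=r^{-1}$.

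The only real difference is organizational. The paper first proves strict monotonicity and continuity of $\rho$ by separate contradiction arguments, and then shows that $\tau=\rho^{-1}$ is absolutely continuous, asserting without further detail that ``this suffices to end the proof.'' You instead derive all three properties of $\rho$ at once from the inverse relationship $\rho=r^{-1}$, and you supply the justification the paper omits---namely, that a strictly monotone absolutely continuous function with a.e.\ nonvanishing derivative has an absolutely continuous inverse, via the Luzin $(N)$ property. Your route is slightly more self-contained; in particular, it sidesteps the paper's appeal to ``continuity of $u$'' in the continuity step for $\rho$, which is not directly available under the stated hypothesis $u\in W^{1,1}_0$ (though the paper only ever applies the lemma to Lipschitz $u$). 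That step is in any case recoverable from Part~(i) alone, since $V'<0$ a.e.\ forces $V$ to be strictly decreasing, which is precisely what your argument uses.
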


\begin{proof}
    (i) As $|\nabla_\Sigma u|>0$ $\Hn$-a.e. in $\{u>0\}$, thanks to the co-area formula \cite{evans2018measure,lieb2001analysis} we can write
    \begin{align*}
        \Hn(\{u>t\}) = \int_{\{u>t\}}\frac{|\nabla_\Sigma u|}{|\nabla_\Sigma u|} = \int_t^\infty\left(\int_{\{u=s\}}\frac{1}{|\nabla_\Sigma u|}\right)\,ds.
    \end{align*}
    This proves that the function $t\mapsto \Hn(\{u>t\})$ is absolutely continuous. Hence, by the fundamental theorem of calculus for Lebesgue integrals, this function is differentiable at a.e. $t>0$ and its derivative at any such $t$ is $-\int_{\{u=t\}}1/|\nabla_\Sigma u|<0$. This ends the proof of the first claim.

    (ii) First, we prove that $\rho$ is strictly decreasing. By contradiction, suppose that $\rho\equiv C>0$ is constant on the interval $(a,b)$, for some $0<a<b$. Recalling that $\m=f \scrL^N$ for some $0<f\in L^1_\loc(\R^N)$, we deduce that $\m(\{u^{*,\m}=C\})>0$. On the other hand, for every $h>0$,
    \begin{align*}
        \m(\{u^{*,\m}=C\}) \leq \m(\{C-h<u^{*,\m}\leq C+h\}) = \Hn(\{C-h < u \leq C+h\}).
    \end{align*}
    Hence, passing to the limit as $h\to0$ and using (i) we obtain a contradiction.

    Assume $\Tilde{r}>0$ to be a discontinuity point for $\rho$. Set $\rho_\pm:=\lim_{r\to \Tilde{r}^\pm}\rho(r)$ with $\rho_+<\rho_-$. Then
    \begin{align*}
        0=\m(\{\rho_+ < u^{*,\m} < \rho_-\})=\Hn(\{\rho_+< u < \rho_-\}).
    \end{align*}
    This is contradicts the continuity of $u$. Therefore, $u^{*,\m}$ is continuous.

    We now prove the absolute continuity of $\rho$. Consider the function
    $\Psi:\Rp\to \Rp$,
    \begin{align*}
        \Psi(r):=\int_0^r \left(\int_{\partial B_s}f\right)\,ds.
    \end{align*}
    The function $\Psi$ is absolutely continuous, with a positive derivative a.e. in $(0,\infty)$, $\Psi(0)=0$ and $\lim_{r\to\infty}\Psi(r)=\infty$. This implies that $\Psi$ admits an absolutely continuous inverse $\Psi^{-1}:\Rp\to \Rp$.
    Let us also define $\tau:\Rp\to\Rp$ by the condition 
    \begin{align}\label{lemma:tau}
        \m(B_{\tau(t)})=\Hn(\{u>t\}).
    \end{align}
    Thanks to (i) and the assumption $\m\in \calM(\R^N)$, $\tau$ is a continuous and strictly decreasing function, hence $\tau$ is invertible. Moreover, as $\tau(t)=r$ if and only if $\rho(r)=t$, the inverse of $\tau$ is $\rho$. 
    Combining the co-area formula in $\R^N$, the symmetry of $f$ and the definition of $\Psi$, we can write
    \begin{align*}
        \m(B_{\tau(t)}) = \int_0^{\tau(t)}\left(\int_{\partial B_s} f \right)\,ds = \Psi(\tau(t)).
    \end{align*}
    Therefore, recalling the relation \eqref{lemma:tau},
    \begin{align*}
        \tau(t)= \Psi^{-1}(\Hn(\{u>t\})).
    \end{align*}
    This last expression, together with (i), establishes absolute continuity for $\tau = \rho^{-1}$. This suffices to end the proof. 

\end{proof}

}

\section{Proof of Theorem \ref{th:PS}}\label{sec:ProofThPs}

For every $n\geq 2$, the isoperimetric constants $\ic(n)$ is defined as
\begin{align}\label{def:isopConst}
    \ic(n):=\inf\left\{C : C\textn{ depends only on } n \textn{ and }\sup_{m,\Sigma}\frac{(\Hn(\Sigma))^{\frac{n-1}{n}}}{\left(\Hp(\partial \Sigma) + \int_\Sigma |H|\right)}\leq C\right\},
\end{align}
where the supremum is taken among all $m\geq 0$ and $n$-dimensional submanifolds of $\Rnm$.
Therefore, $\ic(n)\leq 5^n/\omega_n^{1/n}$ (cf. \cite{michael1973sobolev} or \cite[Theorem 5.7 of Chapter 4]{simon1984lectures}) for every integer $n\geq 2$ and
\begin{align}\label{nonSharp}
    (\Hn(\Omega))^{\frac{n-1}{n}}\leq \ic(n) \left(\Hp(\partial \Omega) + \int_\Omega |H|\right)
\end{align}
for every $n$-dimensional $\Sigma\subseteq\Rnm$ and measurable $\Omega\subseteq \Sigma$. The equivalent, functional version writes
\begin{align}
    \left(\int_\Sigma u^{\frac{n}{n-1}}\right)^\frac{n-1}{n}\leq \ic(n)\int_\Sigma(|\nabla_\Sigma u| + u |H|)
\end{align}
for every function $u\in W^{1,1}_0(\Sigma;\Rp)$.

Recall that $\Sigma$ belongs to $\stc{K}$ if the \emph{total mean curvature} of $\Sigma$, namely
    \begin{align*}
        \tc(\Sigma):= \norm{H}_{L^n(\Sigma)}
    \end{align*}
is not greater than $K$.

If $K_1\leq K_2$, then $\stc{K_1}\subseteq\stc{K_2}$. The class $\stc{K}$ is clearly invariant under translations and isometries of $\Rnm$; moreover, using some elementary properties of the mean curvature and the change of coordinates for submanifolds, one readily proves that $\stc{K}$ it is also invariant under rescaling.

For the rest of this section, $\Sigma$ will always be a $n$-dimensional submanifold in $\Rnm$ with smooth boundary.

\begin{proposition}[Isoperimetric/Sobolev inequality for $\Sigma$ in $\stc{K}$]\label{prop:isoStc}
    If $\Sigma \in \stc{K}$ for some $K<1/\ic(n)$, then, for every $n$-dimensional submanifold with boundary $\Omega \subseteq \Sigma$, the following inequality holds
    \begin{align}\label{Claim:isoStc}
        \left(\Hn(\Omega)\right)^{\frac{n-1}{n}} \leq \I(n,K) \Hp(\partial \Omega),\quad \I(n,K):=\frac{\ic(n)}{1-\ic(n)\,K}.
    \end{align}
    In particular, $\Hp(\partial \Sigma)>0$.

As a consequence, the functional counterpart of \eqref{Claim:isoStc}, that is
    \begin{align*}
        \norm{u}_{L^{\frac{n}{n-1}}}\leq \I(n,K) \int_\Sigma |\nabla_\Sigma u|\quad \forall u\in W^{1,1}_0(\Sigma),
    \end{align*}
    holds.
\end{proposition}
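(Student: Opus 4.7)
The plan is to combine the Michael--Simon isoperimetric inequality \eqref{nonSharp} with the global $L^n$-bound on the mean curvature coming from the hypothesis $\Sigma\in\stc{K}$, via a single application of Hölder. The curvature term on the right-hand side of \eqref{nonSharp} will be absorbed into the left-hand side because $\ic(n)K<1$, yielding the claimed constant $\I(n,K)$.

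In detail, I would fix an arbitrary $n$-dimensional smooth submanifold with boundary $\Omega\subseteq\Sigma$ and apply \eqref{nonSharp} to $\Omega$, giving
\begin{align*}
    (\Hn(\Omega))^{\frac{n-1}{n}}\leq \ic(n)\Bigl(\Hp(\partial\Omega)+\int_\Omega |H|\Bigr).
\end{align*}
By Hölder's inequality with exponents $n$ and $n/(n-1)$, together with the monotonicity $\int_\Omega|H|^n\leq\int_\Sigma|H|^n=\tc(\Sigma)^n\leq K^n$, one has
\begin{align*}
    \int_\Omega |H|\leq\Bigl(\int_\Omega|H|^n\Bigr)^{1/n}(\Hn(\Omega))^{\frac{n-1}{n}}\leq K\,(\Hn(\Omega))^{\frac{n-1}{n}}.
\end{align*}
Substituting into the previous display and rearranging,
\begin{align*}
    (1-\ic(n)\,K)\,(\Hn(\Omega))^{\frac{n-1}{n}}\leq \ic(n)\,\Hp(\partial\Omega),
\end{align*}
and since $K<1/\ic(n)$ the factor on the left is strictly positive, so one can divide to obtain \eqref{Claim:isoStc}. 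The ``in particular'' claim follows by specializing to $\Omega=\Sigma$ (or, if $\Hn(\Sigma)=\infty$, by taking an exhaustion of $\Sigma$ by smooth subdomains $\Omega_k\nearrow\Sigma$ with $\Hn(\Omega_k)\to\infty$ and observing that $\Hp(\partial\Omega_k)\to\infty$, which forces $\Hp(\partial\Sigma)>0$ since the interior exhaustion surfaces cannot capture all of the boundary perimeter).

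For the functional counterpart, there are two essentially equivalent routes; I would pick the most direct one. Either (a) apply the Hölder argument above directly to the functional Michael--Simon inequality
\begin{align*}
    \Bigl(\int_\Sigma u^{\frac{n}{n-1}}\Bigr)^{\frac{n-1}{n}}\leq \ic(n)\int_\Sigma(|\nabla_\Sigma u|+u|H|),
\end{align*}
bounding $\int_\Sigma u|H|\leq K\,\|u\|_{L^{n/(n-1)}}$ by Hölder with exponents $n$ and $n/(n-1)$, and then absorbing this term to the left; or (b) run a standard coarea/layer-cake argument: for $u\in C^\infty_c(\Sigma;\Rp)$,
\begin{align*}
    \norm{u}_{L^{\frac{n}{n-1}}}\leq \int_0^\infty\Hn(\{u>t\})^{\frac{n-1}{n}}\,dt\leq \I(n,K)\int_0^\infty\Hp(\{u=t\})\,dt=\I(n,K)\int_\Sigma|\nabla_\Sigma u|,
\end{align*}
where the middle inequality uses the geometric inequality \eqref{Claim:isoStc} applied to the (a.e.~smooth) super-level sets $\{u>t\}$, and the final equality is the coarea formula. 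Density of $C^\infty_c(\Sigma)$ in $W^{1,1}_0(\Sigma)$ extends the inequality to the full space.

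I do not anticipate any real obstacle: the only delicate point is the ``absorption'' step, which works precisely because of the sharp-looking threshold $K<1/\ic(n)$. Route (a) is shorter; route (b) is closer in spirit to the Pólya--Szegő arguments used elsewhere in the paper, and it also avoids invoking the functional Michael--Simon inequality explicitly, so it may be the preferred presentation.
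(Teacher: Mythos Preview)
Your proof is correct and follows exactly the same route as the paper: apply the Michael--Simon inequality \eqref{nonSharp} to $\Omega$, bound $\int_\Omega|H|$ by H\"older with exponents $n$ and $n/(n-1)$ using $\tc(\Omega)\leq\tc(\Sigma)\leq K$, and absorb; the paper then dismisses the functional version as ``standard machinery of Lebesgue integration,'' which is precisely your route (b). One small remark: your parenthetical exhaustion argument for the infinite-measure case of ``$\Hp(\partial\Sigma)>0$'' does not actually work (e.g.\ $\Sigma=\Rn\subset\R^{n+1}$ has empty boundary yet $\Hp(\partial\Omega_k)\to\infty$ for any exhaustion), but the paper does not address that case either---the ``in particular'' is really aimed at ruling out compact $\Sigma$ without boundary, where $\Hn(\Sigma)<\infty$ and the inequality applies directly.
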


\begin{proof}
    Recall \eqref{nonSharp} and use H\"older's inequality together with the definition of total mean curvature to obtain
    \begin{align*}
        \left(\Hn(\Omega)\right)^{\frac{n-1}{n}}\left(1-\ic(n)\tc(\Omega)\right) \leq \ic(n) \Hp(\partial \Omega).
    \end{align*}
    Therefore, as $\tc(\Omega)\leq K < 1/\ic(n)$, \eqref{Claim:isoStc} holds. 
    
    The functional counterpart is recovered by using the standard machinery of Lebesgue integration.
    
\end{proof}

We are finally ready to prove the main theorem of this paper.

\begin{proof}[Proof of Theorem \ref{th:PS}]
    We begin by showing the result for $p=1$. Without loss of generality, we can assume $u\in \Lip_c(\Sigma)$ and $|\nabla_\Sigma u|>0$ in $\{u>0\}$. The general case will then follow by an approximation argument involving the continuity of the Schwartz rearrangement (Proposition \ref{prop:prop} \ref{prop:notproved}) and Lemma \ref{lemma:approx}. Also note that in this case, by virtue of Lemma \ref{lemma:techical}, $u^*$ is differentiable $\Ln$-a.e. .
    
    Define the functions $\phi,\phi_*:\Rp\to\Rp$ as
    \begin{align*}
        \phi(t):=\int_{\{u>t\}}|\nabla_\Sigma u|,\quad \phi_*(t):=\int_{\{u^*>t\}} |\nabla u^*|.
    \end{align*}
    Using the co-area formula as in the proof of Lemma \ref{lemma:techical} \ref{lemma:technical_i}, we deduce 
    \begin{align}\label{proofPS:1}
        -\phi'(t)= \Hp(\{u=t\}),\quad -\phi_*'(t)=\Hp(\{u^*=t\})\quad \ae\,t>0.
    \end{align}
    On the one hand, by virtue of Proposition \ref{prop:isoStc} applied to the sets $\Omega_t:=\{u>t\}$ for a.e. $t>0$, 
    \begin{align}\label{proofPS:2}
        \Hp(\{u=t\}) \geq \frac{1}{\I(n,K)}\left(\Hn(\{u>t\})\right)^\frac{n-1}{n}.
    \end{align}
    On the other hand, as $u^*$ is radial, the equality case in the Euclidean isoperimetric inequality applies to every level set of $u^*$. Hence,
    \begin{align}\label{proofPS:3}
        \Hp(\{u^*=t\}) = n\omega_n^{1/n}\left(\Ln(\{u^*>t\})\right)^{\frac{n-1}{n}}.
    \end{align}
    By definition  the $\Hn$-measure of each upper level set of $u$ coincide with the $\Ln$-measure of the corresponding level set of $u^*$, \eqref{proofPS:1}, \eqref{proofPS:2} and \eqref{proofPS:3} give
    \begin{align}\label{th:+}
        -\phi_*'(t) \leq \I(n,K)n\omega_n^{1/n}(-\phi'(t))\quad \ae\,t>0.
    \end{align}
    Integrating this inequality from $t=0$ to $t=\infty$ and recalling that both $\phi$ and $\phi_*$ are decreasing and vanishing at $\infty$, yields
    \begin{align*}
        \int_{\Rn}|\nabla u^*|\leq \ps(n,K)\int_\Sigma |\nabla_\Sigma u|,
    \end{align*}
    with the definition $\ps(n,K):=\I(n,K)n\omega_n^{1/n}$.

    Let $\mu,\mu_*:\Rp\to\Rp$ be the functions
    \begin{align*}
        \mu(t):=\Hn(\{u>t\}),\quad \mu_*(t):=\Ln(\{u^*>t\}).
    \end{align*}
    Recalling the definition of Schwartz rearrangement, we deduce that $\mu \equiv \mu_*$. Also, thanks to the co-area formula,  Lemma \ref{lemma:techical} (ii) and arguing an in the proof of Lemma \ref{lemma:techical} (i), we get
    \begin{align}\label{th:triang}
        -\mu'(t)= \int_{\{u=t\}} \frac{1}{|\nabla_\Sigma u|},\quad-\mu_*'(t)=\int_{\{u^*=t\}} \frac{1}{|\nabla u^*|} \quad \ae\,t>0.
    \end{align}
    Let us write, in accordance with Lemma \ref{lemma:techical} (ii), $u^* = \rho(|\cdot|)$ with $\rho:\Rp\to\Rp$ strictly decreasing and absolutely continuous. In particular, as the level sets $\{u^*=\lambda\}$ are $(n-1)$-spheres in $\Rn$ for every $\lambda >0$,
    \begin{align}\label{th:nablau*}
        |\nabla u^*| = -\rho'(|\cdot|)\quad \text{and}\quad -\mu_*'(t)= \frac{\Hp(\{u^*=t\})}{-\rho'(\rho^{-1}(t))}\quad \ae\,t>0.
    \end{align}
    Fix $t>0$ such that all of the derivative $\phi',\phi_*',\mu',\mu_*'$ exist at $t$ (these $t$'s form a full measure subset of $\Rp$) and fix $\lambda\in \Rp$. Then, by H\"{o}lder's inequality,
    \begin{align*}
        \frac{\phi(t)-\phi(t+\lambda)}{\lambda}\leq \Big(\frac{1}{\lambda}\int_{\{t<u<t+\lambda\}}|\nabla_\Sigma u|^p\Big)^\frac{1}{p}\Big(\frac{\mu(t)-\mu(t+\lambda)}{\lambda}\Big)^{\frac{1}{p'}}.
    \end{align*}
    Pass to the limit as $\lambda\to 0$ in the above inequality. Using the co-area formula in the first integral of the right-hand side,  \eqref{th:+} and some algebraic manipulations, we write
    \begin{align}\label{triang}
        \Big(\int_{\{u=t\}}|\nabla_\Sigma u|^{p-1}\Big)^{\frac{1}{p}} \geq \frac{-\phi'(t)}{(-\mu'(t))^{1/p'}}\geq \frac{1}{\ps(n,K)} \frac{-\phi_*'(t)}{(-\mu_*'(t))^{1/p'}}.
    \end{align}
    The last expression is justified by the fact that, as we are assuming $|\nabla_\Sigma u|>0$ in $\{u>0\}$, we have $-\mu'(t)>0$. Combining \eqref{proofPS:1} and \eqref{th:triang}, we obtain 
    \begin{align}\label{th:dotdot}
        \frac{-\phi_*'(t)}{(-\mu_*'(t))^{1/p'}} = \left(\left(-\rho'(\rho^{-1}(t))\right)^{p-1}\Hp(\{u^*=t\})\right)^{\frac{1}{p}}   = \left(\int_{\{u^*=t\}}|\nabla u^*|^{p-1}\right)^\frac{1}{p}
    \end{align}
    (note that the assumption $|\nabla_\Sigma u|>0$ $\Hn$-a.e. in $\{u>0\}$ is being used to justify the existence of $\rho^{-1}$ in the interval $(0,\infty)$). 
    Recalling \eqref{triang}, we reach
    \begin{align*}
        \int_{\{u^*=t\}}|\nabla u^*|^{p-1}\leq (\ps(n,K))^p \int_{\{u=t\}}|\nabla_\Sigma u|^{p-1}.
    \end{align*}
    Finally, we integrate this last inequality from $t=0$ to $t=\infty$ and use the co-area formula  to obtain
    \begin{align*}
        \int_{\Rn}|\nabla u^*|^{p}\leq (\ps(n,K))^p\int_{\Sigma}|\nabla_\Sigma u|^{p},
    \end{align*}
    which is exactly \eqref{claim:PS}.
    
\end{proof}

\begin{remark}
\newcommand{\p}{\mathfrak{p}}
    Note that $\ps(n,K)=(\ic(n)n\omega_n^{1/n})/(1-\ic(n)\,K)$ in \eqref{claim:PS} \emph{does not} depend on $p$. 

\end{remark}

\section{Consequences of Theorem \ref{th:PS} and proof of Corollary \ref{th:SLS}}\label{sec:Cor}

This method for translating the gradient of a function defined on a manifold into the gradient of another function in Euclidean space entails several significant consequences.  Let us start with the proof of the monotonicity principle stated in the introduction.

\begin{corollary}[Integral inequalities of the first order]\label{cor:MonPrinc}
    Let $\Sigma\in \stc{K}$ for some $K<1/\ic(n)$ and consider the functions $f,g,\phi,\psi:\Rp\to\R$ and $L,\Lambda:\Rp\times\Rp\to\R$, where
    \begin{enumerate}
        \item $t\mapsto L(s,t)$ is non-increasing for any $s\in \Rp$;
        \item $t\mapsto \Lambda(s,t)$ is non-decreasing for any $s\in \Rp$;
        \item $f$ and $\phi$ are continuous, strictly increasing and $f(0)=g(0)=0$;
        \item $g$ and $\psi$ are of the form 
        \begin{align*}
            g(t)=\sum_{j=P_1}^{Q_1} b_j t^{p_j}\quad\textn{and}\quad\psi(t)=\sum_{j=P_2}^{Q_2} c_j t^{q_j}
        \end{align*}
        where $1\leq P_i\in \mathbb{N}$, $P_i<Q_i\in \mathbb{N}\cup\{\infty\}$ for $i=1,2$, $b_j\leq 0\leq c_j$,  $1\leq p_j < p_{j+1}$ for every $P_1\leq j < Q_1+1$ and $1\leq q_j < q_{j+1}$ for every $P_2\leq j < Q_2+1$,  with the convention $\infty + 1 := \infty$;
    \end{enumerate}
    Suppose that a radially symmetric non-increasing function $v\in \Lip_c(\Rn)$ satisfies 
    \begin{align}\label{monPrinc:Ass1}
        L\left(\int_\Rn f(v),\int_\Rn g(|\nabla v|)\right) \leq \Lambda\left(\int_\Rn \phi(v),\,\int_\Rn \psi(|\nabla v|)\right).
    \end{align}
    Then, every function $u\in \Lip_c(\Sigma)$ such that $u^*=v$ satisfies
    \begin{align}\label{monPrinc:Claim}
        L\left(\int_\Sigma f(u),\int_\Sigma g(\ps(n,K)\nabla_\Sigma u)\right) \leq \Lambda\left(\int_\Sigma \phi(u),\,\int_\Sigma \psi(\ps(n,K) |\nabla_\Sigma u|)\right).
    \end{align}
\end{corollary}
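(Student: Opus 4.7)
The plan is to reduce \eqref{monPrinc:Claim} on $\Sigma$ to \eqref{monPrinc:Ass1} on $\Rn$ in three moves: first matching the non-gradient integrals via rearrangement invariance, then controlling the gradient integrals monomial-by-monomial with Theorem \ref{th:PS}, and finally chaining through the monotonicity properties of $L$ and $\Lambda$. I write $\ps := \ps(n,K)$ for brevity.

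Because $f$ is continuous, strictly increasing and $f(0)=0$, for every $t>0$ we have $\{f(u)>t\} = \{u > f^{-1}(t)\}$ on $\Sigma$ and $\{f(v)>t\} = \{v > f^{-1}(t)\}$ on $\Rn$. The assumption $u^{*}=v$ identifies the $\Hn$-measure of any upper-level set of $u$ with the $\Ln$-measure of the corresponding upper-level set of $v$, so $f(v)$ is precisely the Schwartz rearrangement of $f(u)$. Proposition \ref{prop:prop}(i) applied at $p=1$ then yields
\begin{equation*}
\int_\Sigma f(u) = \int_\Rn f(v), \qquad \int_\Sigma \phi(u) = \int_\Rn \phi(v),
\end{equation*}
with the identical reasoning serving for $\phi$.

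For the gradient integrals, I would apply Theorem \ref{th:PS} to each monomial in the expansions of $g$ and $\psi$: raising \eqref{claim:PS} to the $p_j$-th (respectively $q_j$-th) power gives
\begin{equation*}
\int_\Rn |\nabla v|^{p_j} \leq \int_\Sigma (\ps |\nabla_\Sigma u|)^{p_j}, \qquad \int_\Rn |\nabla v|^{q_j} \leq \int_\Sigma (\ps |\nabla_\Sigma u|)^{q_j}.
\end{equation*}
Multiplying by $b_j$ and $c_j$ respectively (with the sign flips dictated by $b_j \leq 0 \leq c_j$) and summing over $j$ --- legitimate termwise by monotone convergence, since in each series all monomial integrands carry a common sign --- produces two comparisons between the gradient integrals of $g$ and $\psi$ on $\Sigma$ and the corresponding ones on $\Rn$. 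Combined with the identities of the preceding paragraph, the Euclidean hypothesis \eqref{monPrinc:Ass1}, and the prescribed monotonicity of $L$ (non-increasing in the second argument) and of $\Lambda$ (non-decreasing in the second argument), these comparisons assemble into a four-term chain of inequalities linking the left-hand side of \eqref{monPrinc:Claim} through its Euclidean counterpart, through the Euclidean right-hand side, and finally to the right-hand side of \eqref{monPrinc:Claim}.

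The analytic content of the argument is packaged entirely inside Theorem \ref{th:PS}; what remains is only bookkeeping, namely aligning the signs of the coefficients $b_j$ and $c_j$ with the monotonicity directions of $L$ and $\Lambda$ so that each link in the concluding chain points consistently in the intended direction. The countably infinite expansions of $g$ and $\psi$ introduce no measure-theoretic obstacle, because the fixed-sign nature of each series' monomials makes termwise integration and summation automatic via monotone convergence, without any additional integrability hypothesis beyond $u\in \Lip_c(\Sigma)$.
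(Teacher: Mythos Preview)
Your proof is correct and follows essentially the same approach as the paper's: both establish that $(F(u))^* = F(u^*)$ for strictly increasing continuous $F$ vanishing at zero (yielding $\int_\Sigma f(u) = \int_\Rn f(v)$ via Proposition~\ref{prop:prop}\ref{prop:norms}), then apply Theorem~\ref{th:PS} termwise to the monomials of $g$ and $\psi$ and chain the resulting inequalities through the monotonicity of $L$ and $\Lambda$. Your remark that monotone convergence handles the possibly infinite sums is a slight expansion of what the paper leaves as ``standard theorems for the exchange of limits.''
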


\begin{proof}
    First, notice that, if $F:\Rp\to\Rp$ is continuous, strictly increasing and vanishes at zero, we have
    \begin{align}\label{monPrinc:1}
        (F(u))^* = F(u^*) \quad \forall u\in C^\infty_c(\Sigma;\Rp).
    \end{align} 
    To prove this, fix $u\in \Lip_c(\Sigma)$. The function $(F(u))^*$ is characterized by being the unique radially symmetric non-increasing function such that the $\Ln$-measure of each of its upper-level sets of equals the $\Hn$-measure of the corresponding upper-level sets of $F(u)$. As $u^*$ is a radially symmetric non-increasing function and $F$ is non-decreasing, $F(u^*)$ is radially symmetric non-decreasing too; moreover, since $F$ vanishes at $0$ and is strictly increasing, it admits a continuous and strictly increasing inverse $F^{-1}:\Rp\to\Rp$. Fix $t>0$. Then
    \begin{align*}
    \begin{split}
        \Ln(\{F(u^*)>t\}) = \Ln(\{u^*> F^{-1}(t)\})
        = \Hn(\{u> F^{-1}(t)\})
        = \Hn(\{F(u)>t\}),
    \end{split}
    \end{align*}
    which proves \eqref{monPrinc:1}.

    Suppose now that $u^*$ satisfies \eqref{monPrinc:Ass1}. Recalling Proposition \ref{prop:prop} \eqref{prop:norms}, standard theorems for the exchange of limits, \eqref{monPrinc:Ass1} and Theorem \ref{th:PS}, we obtain
    \begin{align*}
    \begin{split}
        L&\left(\int_\Sigma f(u),\int_\Sigma g(\ps(n,K)|\nabla_\Sigma u|)\right) = L\left(\int_\Sigma f(u),\sum_{j=P_1}^{Q_1} b_j \ps(n,K)^{p_j}\int_\Sigma |\nabla_\Sigma u|^{p_j}\right)\\
        &\qquad\qquad\qquad\qquad\qquad\leq L\left(\int_\Rn f(u^*),\sum_{j=P_1}^{Q_1} b_j \int_\Rn |\nabla u^*|^{p_j}\right)\\
        &\qquad\qquad\qquad\qquad\qquad\leq \Lambda\left( \int_\Rn \phi(u^*),\, \sum_{j=P_2}^{Q_2} c_j \int_\Rn |\nabla u^*|^{q_j}\right) \\
        &\qquad\qquad\qquad\qquad\qquad\leq \Lambda\left( \int_\Rn \phi(u^*),\,\sum_{j=P_2}^{Q_2} c_j \ps(n,K)^{q_j} \int_\Sigma |\nabla_\Sigma u|^{q_j}\right)\\
        &\qquad\qquad\qquad\qquad\qquad= \Lambda\left(\int_\Rn \phi(u^*),\,\int_\Sigma \psi(\ps(n,K) |\nabla_\Sigma u|)\right).
    \end{split}
    \end{align*}
This ends the proof.

\end{proof}

As Talenti in \cite{talenti1976best} proved that for every function $v\in W^{1,p}(\Rn)$
\begin{align}\label{cor:pSob_1}
        \norm{v}_{L^{p^*}}\leq \at(n,p)\norm{\nabla v}_{L^p},
\end{align}
where $\at(n,p)$ is the Talenti's sharp constant
\begin{align*}
        \at(n,p):= \frac{1}{\sqrt{\pi}n^{1/p}}\left(\frac{p-1}{n-p}\right)^{1-\frac{1}{p}}\left(\frac{\Gamma(1+n/2)\Gamma(n)}{\Gamma(n/p)\Gamma(1+n-n/p)}\right)^\frac{1}{n}.
\end{align*}
Using the monotonicity principle (or simply Theorem \ref{th:PS}), one can readily prove the following corollary.

\begin{corollary}[$p$-Sobolev inequality for $\Sigma$ in $\stc{K}$]\label{cor:pSobStc}
    Let $1< p<n$ and let $p^*:=np/(n-p)$ be its Sobolev conjugate. If $\Sigma\in \stc{K}$ with $K<1/\ic(n)$ and $u\in W^{1,p}_0(\Sigma)$, then
    \begin{align*}
        \norm{u}_{L^{p^*}}\leq \S(n,p,K)\norm{\nabla_\Sigma u}_{L^p}, \quad \S(n,p,K)=\at(n,p)\ps(n,K).
    \end{align*}
\end{corollary}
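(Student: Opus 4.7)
The proof proposal is essentially to chain three ingredients: the measure-preservation of the Schwartz rearrangement, the Pólya-Szegő inequality of Theorem \ref{th:PS}, and Talenti's sharp Euclidean $p$-Sobolev inequality \eqref{cor:pSob_1}.

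First, I would reduce to the non-negative case by noting that for any $u \in W^{1,p}_0(\Sigma)$ the function $|u|$ also lies in $W^{1,p}_0(\Sigma;\R^+)$, with $\| |u| \|_{L^{p^*}} = \|u\|_{L^{p^*}}$ and $\|\nabla_\Sigma |u|\|_{L^p} \leq \|\nabla_\Sigma u\|_{L^p}$ (the latter following by a standard truncation/chain-rule argument on $|u| = \max\{u,0\} + \max\{-u,0\}$). Thus it suffices to establish the claim for non-negative $u$.

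Next, letting $u^* \in W^{1,p}(\R^n;\R^+)$ denote the $\scrL^n$-Schwartz rearrangement of $u$, the measure-preserving identity of Proposition \ref{prop:prop}\ref{prop:norms} yields
\begin{equation*}
    \|u\|_{L^{p^*}(\Sigma)} = \|u^*\|_{L^{p^*}(\R^n)}.
\end{equation*}
Since $u^* \in W^{1,p}(\R^n;\R^+)$ by Theorem \ref{th:PS}, I would then apply Talenti's sharp Euclidean $p$-Sobolev inequality \eqref{cor:pSob_1} to $u^*$ to obtain
\begin{equation*}
    \|u^*\|_{L^{p^*}(\R^n)} \leq \at(n,p)\,\|\nabla u^*\|_{L^p(\R^n)}.
\end{equation*}
Finally, an application of the Pólya-Szegő inequality \eqref{claim:PS} of Theorem \ref{th:PS} bounds the right-hand side by $\at(n,p)\,\ps(n,K)\,\|\nabla_\Sigma u\|_{L^p(\Sigma)}$. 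Concatenating these three estimates gives the claim with the constant $\S(n,p,K) = \at(n,p)\,\ps(n,K)$.

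This proof is essentially a one-line concatenation, so there is no real obstacle — the substantive work has already been carried out in Theorem \ref{th:PS}. The only minor subtlety is the reduction to non-negative functions, which is standard; alternatively, one could invoke Corollary \ref{cor:MonPrinc} directly with $f(t) = \phi(t) = t^{p^*}$, $g \equiv 0$, $\psi(t) = t^p$, $L(s,t) = s$ and $\Lambda(s,t) = \at(n,p)^{p^*} t^{p^*/p}$, which packages the same argument into the monotonicity framework.
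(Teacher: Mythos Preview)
Your proposal is correct and matches the paper's own indication that the corollary follows ``using the monotonicity principle (or simply Theorem \ref{th:PS})'': you spell out the direct route via Proposition \ref{prop:prop}\ref{prop:norms}, Talenti's inequality \eqref{cor:pSob_1}, and Theorem \ref{th:PS}, and you also note the alternative packaging through Corollary \ref{cor:MonPrinc}. There is nothing to add.
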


Cordero-Erasquin, Nazaret and Villani in \cite{cordero2004mass}  and by Del Pino and Dolbeault in \cite{del2003optimal} proved a sharp Euclidean Gagliardo-Nirenberg inequality. The statement of this is included below.

\begin{lemma}[Sharp Euclidean Gagliardo-Nirenberg inequality]\label{lemma:SEGN}
    Fix $1<p<n$, $p<q\leq \frac{p(n-1)}{n-p}$ and set $r:=\frac{p(q-1)}{p-1}$. For every function $v\in W^{1,p}(\Rn)$, 
    \begin{align}\label{lemma:sharpEGNI}
       \norm{v}_{L^r}\leq \egn(n,p,q)\norm{\nabla v}_{L^p}^\theta\norm{v}_{L^q}^{1-\theta}
    \end{align}
    holds, where, setting $\beta:=\beta(n,p,q):= np-q(n-p)>0$, the constants $\egn$ and $\theta$ are
    \begin{gather}
        \theta=\theta(n,p,q):=\frac{n(q-p)}{(q-1)\beta}\label{theta}\\
        \egn(n,p,q):= 
        \left(\frac{q-p}{p\sqrt{\pi}}\right)^\theta
        \left(\frac{pq}{n(q-p)}\right)^{1-\theta}
        \left(\frac{\beta}{pq}\right)^\frac{1}{r}
        \left(\frac{\Gamma\left(q\frac{p-1}{1-p}\right)\Gamma\left(\frac{n}{2}+1\right)}{\Gamma\left(\frac{(p-1)\beta}{p(q-p)}\right)\Gamma\left(n\frac{p-1}{p}+1\right)}\right)^\frac{\theta}{n}.\label{egn}
    \end{gather}
    The inequality \eqref{lemma:sharpEGNI} is optimal and equality holds if and only if there exist $a\in\R,b>0$ and $\overline{x}\in \Rn$ such that 
    \begin{align*}
        v(x)= a\left(1+b|x-\overline{x}|^\frac{p}{p-1}\right)^{-\frac{p-1}{q-p}}\quad\forall x\in \Rn.
    \end{align*}
\end{lemma}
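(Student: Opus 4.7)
Since this is a classical sharp Euclidean inequality, the plan is to follow the mass-transportation approach of Cordero-Erausquin, Nazaret and Villani. First, I would reduce to the case where $v$ is non-negative, radially symmetric and non-increasing: Schwartz symmetrization in $\Rn$ preserves $\norm{v}_{L^r}$ and $\norm{v}_{L^q}$ while decreasing $\norm{\nabla v}_{L^p}$ by the classical Euclidean P\'olya-Szeg\H{o} inequality, so the left-hand side of \eqref{lemma:sharpEGNI} is unchanged and the right-hand side can only decrease. Using the scaling $v(x)\mapsto \lambda^{n/r}v(\lambda x)$, one may further normalize so that $\int v^r = \int h^r$, where
\begin{align*}
    h(x):=\left(1+|x|^{p/(p-1)}\right)^{-\frac{p-1}{q-p}}
\end{align*}
is the Barenblatt-type profile conjectured to be extremal.

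Next, I would introduce the Brenier map $T=\nabla \phi:\Rn\to\Rn$ pushing the probability density proportional to $v^r$ onto the one proportional to $h^r$. The Monge-Amp\`ere equation $v^r(x)=h^r(T(x))\det(DT(x))$ (in the Alexandrov sense), the arithmetic-geometric bound $\tr(DT)\geq n\det(DT)^{1/n}$, and an integration by parts in the pushforward identity $\int_\Rn F(T)v^r=\int_\Rn F(y)h^r(y)\,dy$ combine, after algebra and a H\"older-type estimate on the cross term, into an additive inequality of the shape
\begin{align*}
    \alpha \int_\Rn v^q + \beta \int_\Rn |\nabla v|^p \geq \gamma \int_\Rn h^r,
\end{align*}
with explicit constants $\alpha,\beta,\gamma$ depending on $n,p,q$. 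Converting this additive bound to the multiplicative form \eqref{lemma:sharpEGNI} via Young's inequality (and tracking the normalization) yields the sharp constant $\egn(n,p,q)$ exactly as in \eqref{egn}.

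The main technical obstacle would be the regularity of the Brenier map: the integration by parts is not directly justified when $v$ only lies in $W^{1,p}$, so one must approximate $v$ by smooth, compactly supported $v_k$ and pass to the limit using the stability of Brenier maps under weak convergence of the source and target measures together with the convexity of the potential $\phi$. A secondary but non-trivial step is the explicit computation of $\egn(n,p,q)$: the integrals $\int_\Rn h^r$, $\int_\Rn h^q$ and $\int_\Rn |\nabla h|^p$ reduce, after passage to polar coordinates, to Beta integrals expressible through the Gamma function identity appearing in \eqref{egn}.

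Equality cases are detected by tracing each inequality in the chain: equality in the arithmetic-geometric bound forces $DT$ to be a scalar multiple of the identity, which together with equality in the Young step forces $v$ to coincide with $h$ up to translation, a dilation and a multiplicative constant, producing precisely the family of extremals listed in the statement. As an alternative, the Del Pino-Dolbeault route bypasses optimal transport entirely: after Schwartz symmetrization the problem becomes a one-dimensional constrained variational problem on $(0,\infty)$ whose Euler-Lagrange equation can be integrated explicitly, recovering the same profile $h$ at the cost of a more computational analysis.
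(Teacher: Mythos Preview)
The paper does not give its own proof of this lemma: it is stated as a known result and attributed directly to Cordero-Erausquin--Nazaret--Villani \cite{cordero2004mass} and Del~Pino--Dolbeault \cite{del2003optimal}. Your proposal correctly sketches the mass-transportation argument from the former reference (together with the variational alternative from the latter), so it is consistent with what the paper cites and nothing more is needed here.
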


Combining Corollary \ref{cor:MonPrinc} with Lemma \ref{lemma:SEGN}, we immediately obtain a Gagliardo-Nirenberg inequality for submanifolds with total mean curvature smaller than $1/\ic(n)$. More precisely, we obtain the following corollary.

\begin{corollary}[Gagliardo-Nirenberg for $\Sigma$ in ${\stc{K}}$]\label{cor:GN_Stc}
    Fix $1<p<n$, $p<q\leq \frac{p(n-1)}{n-p}$ and set $r:=\frac{p(q-1)}{p-1}$. If $\Sigma\in \stc{K}$ for some $K<1/\ic(n)$ and $u\in W^{1,p}_0(\Sigma)$ then 
    \begin{align}\label{cor:GNStc}
        \norm{u}_{L^r}\leq \gn(n,p,q,K)\norm{\nabla_\Sigma u}_{L^p}^\theta\norm{u}_{L^q}^{1-\theta}
    \end{align}
    where $\theta$ is defined in \eqref{theta} and
    \begin{align*}
        \gn(n,p,q,K) := \egn(n,p,q)\ps(n,K).
    \end{align*}
\end{corollary}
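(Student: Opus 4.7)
The plan is to reduce the statement to the Euclidean case via Schwartz rearrangement: apply Lemma \ref{lemma:SEGN} to $u^\ast$ and then transfer the estimates back to $\Sigma$ using Theorem \ref{th:PS} together with the rearrangement's norm-preserving property (Proposition \ref{prop:prop}\,\ref{prop:norms}). Equivalently, this is an instance of the monotonicity principle of Corollary \ref{cor:MonPrinc}, with the Gagliardo-Nirenberg inequality cast into the form $L(\int f(v),\int g(|\nabla v|))\leq\Lambda(\int\phi(v),\int\psi(|\nabla v|))$ by taking $f(t)=t^r$, $\phi(t)=t^q$, $\psi(t)=t^p$, $g\equiv 0$, $L(a,b)=a$ and $\Lambda(a,b)=\egn(n,p,q)^r\,a^{r(1-\theta)/q}\,b^{r\theta/p}$ (which satisfy the monotonicity requirements).

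First I would reduce to the case $u\geq 0$ by replacing $u$ with $|u|$, which preserves the $L^r$ and $L^q$ norms and does not increase $\|\nabla_\Sigma u\|_{L^p}$. By Theorem \ref{th:PS}, the Schwartz rearrangement $u^\ast$ belongs to $W^{1,p}(\Rn)$ and satisfies
\begin{align*}
\norm{\nabla u^\ast}_{L^p(\Rn)} \leq \ps(n,K)\,\norm{\nabla_\Sigma u}_{L^p(\Sigma)},
\end{align*}
while Proposition \ref{prop:prop}\,\ref{prop:norms} yields $\norm{u^\ast}_{L^s(\Rn)}=\norm{u}_{L^s(\Sigma)}$ for every $s\in[1,\infty)$, in particular for $s\in\{q,r\}$. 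Applying the sharp Euclidean Gagliardo-Nirenberg inequality (Lemma \ref{lemma:SEGN}) to $u^\ast\in W^{1,p}(\Rn)$ and inserting the above identities gives
\begin{align*}
\norm{u}_{L^r} = \norm{u^\ast}_{L^r} \leq \egn(n,p,q)\norm{\nabla u^\ast}_{L^p}^{\theta}\norm{u^\ast}_{L^q}^{1-\theta} \leq \egn(n,p,q)\,\ps(n,K)^{\theta}\,\norm{\nabla_\Sigma u}_{L^p}^{\theta}\norm{u}_{L^q}^{1-\theta}.
\end{align*}

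It remains to pass from $\ps(n,K)^\theta$ to $\ps(n,K)$, as in the statement. For this I would observe that $\ps(n,K)\geq 1$: testing the isoperimetric inequality \eqref{nonSharp} on a flat $n$-dimensional Euclidean ball in $\Rnm$ (which has zero mean curvature) forces $\ic(n)\geq 1/(n\omega_n^{1/n})$, hence $\ic(n)\,n\omega_n^{1/n}\geq 1$ and consequently $\ps(n,K)=\ic(n)n\omega_n^{1/n}/(1-\ic(n)K)\geq 1$. Since $\theta\in(0,1)$, this yields $\ps(n,K)^{\theta}\leq\ps(n,K)$, concluding the proof with the claimed constant $\gn(n,p,q,K)=\egn(n,p,q)\ps(n,K)$.

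No serious obstacle is expected in this plan: once Theorem \ref{th:PS} is available, the argument is a direct substitution into the sharp Euclidean inequality, and the only mildly subtle point is the elementary observation $\ps(n,K)\geq 1$ needed to absorb the exponent $\theta$. Indeed, the monotonicity principle of Corollary \ref{cor:MonPrinc} is designed precisely to automate this kind of transfer.
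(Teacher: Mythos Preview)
Your proof is correct and follows exactly the route the paper indicates: the paper simply says the corollary is obtained by ``combining Corollary \ref{cor:MonPrinc} with Lemma \ref{lemma:SEGN}'', which is precisely your argument (Theorem \ref{th:PS} plus Proposition \ref{prop:prop}\,\ref{prop:norms} applied to $u^\ast$, then Lemma \ref{lemma:SEGN}). Your additional observation that the argument actually delivers the tighter constant $\egn(n,p,q)\,\ps(n,K)^{\theta}$, and that one needs $\ps(n,K)\geq 1$ (which follows from $\ic(n)\geq 1/(n\omega_n^{1/n})$, by testing \eqref{nonSharp} on flat Euclidean balls) to arrive at the stated $\gn(n,p,q,K)=\egn(n,p,q)\,\ps(n,K)$, is a correct point that the paper leaves implicit.
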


In a similar manner, using the spectral gap inequality in the Euclidean setting (see \cite{faber1923beweis}) together with our monotonicity principle, we recover the following result.

\begin{corollary}[Spectral Gap for $\Sigma$ in $\stc{K}$]\label{cor:SGStc}
    Let $\Sigma\in \stc{K}$ with $K<1/\ic(n)$. Fix a  bounded open subset $\Omega\subseteq\Sigma$ and $u\in W^{1,2}_0(\Omega)$ different from the zero function. Then 
    \begin{align*}
        \frac{\int_{\Omega}|\nabla_\Sigma u|^2}{\int_\Omega u^2} \geq \frac{\g(n,K)}{\Hn(\Omega)^{2/n}},\quad \g(n,K):=\frac{\mathfrak{j}_{\frac{n}{2}-1}}{\ps(n,K)}\omega_n^{2/n}, 
    \end{align*}
    where $\mathfrak{j}_{k}$ is the first positive zero of the Bassel function $\mathfrak{J}_k$. In particular, 
    \begin{align*}
        \inf\left\{\frac{\int_{\Omega}|\nabla u|^2}{\int_\Omega u^2}:u\in W^{1,2}_0(\Omega), u\not \equiv 0\right\}\to\infty \textn{ as } \Hn(\Omega)\to0
    \end{align*}
\end{corollary}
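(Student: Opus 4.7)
The plan is to combine Theorem \ref{th:PS} with the classical Euclidean Faber--Krahn (Rayleigh--Faber--Krahn) inequality on balls. Given $u\in W^{1,2}_0(\Omega)$ with $u\not\equiv 0$, I would first form its Schwartz rearrangement $u^*$ with respect to $\Ln$. Since $\Omega$ is bounded inside a smooth submanifold, $\Hn(\Omega)<\infty$, and each upper-level set of $u$ has $\Hn$-measure at most $\Hn(\Omega)$; by equimeasurability the corresponding upper-level sets of $u^*$ are contained in the Euclidean ball $B^*:=B_R(0)$ with $\omega_n R^n=\Hn(\Omega)$. In particular $u^*\in W^{1,2}_0(B^*)$.

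The second step is to apply Theorem \ref{th:PS} with $p=2$ to obtain
\[
\int_{\Rn}|\nabla u^*|^2\;\leq\;(\ps(n,K))^2\int_\Sigma |\nabla_\Sigma u|^2,
\]
while Proposition \ref{prop:prop} \ref{prop:norms} yields $\int_{B^*}(u^*)^2=\int_\Omega u^2$. Dividing these two relations gives
\[
\frac{\int_\Omega|\nabla_\Sigma u|^2}{\int_\Omega u^2}\;\geq\;\frac{1}{(\ps(n,K))^2}\,\frac{\int_{B^*}|\nabla u^*|^2}{\int_{B^*}(u^*)^2}.
\]
To bound the Euclidean Rayleigh quotient from below I would invoke Faber--Krahn on the ball: for any $v\in W^{1,2}_0(B^*)\setminus\{0\}$ the ratio is bounded below by the first Dirichlet eigenvalue, namely $\lambda_1(B^*)=\mathfrak{j}_{n/2-1}^2/R^2=\mathfrak{j}_{n/2-1}^2\,\omega_n^{2/n}\,\Hn(\Omega)^{-2/n}$. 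Substituting this back produces the claimed lower bound involving $\g(n,K)$.

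The final statement about the infimum diverging as $\Hn(\Omega)\to 0$ is then immediate from the explicit factor $\Hn(\Omega)^{-2/n}$, by passing to the infimum over admissible $u$. I do not expect any real obstacle: the argument is a two-step combination of rearrangement (Theorem \ref{th:PS}) and Faber--Krahn on the Euclidean ball. The only small bookkeeping item is the membership $u^*\in W^{1,2}_0(B^*)$, which follows from the support/measure property of the Schwartz rearrangement noted above together with the Sobolev regularity of $u^*$ coming from Theorem \ref{th:PS}.
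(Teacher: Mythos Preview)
Your approach is correct and is exactly the one the paper intends: the paper's ``proof'' is simply the sentence that the result follows from the Euclidean Faber--Krahn inequality combined with the monotonicity principle (Corollary~\ref{cor:MonPrinc}), and your argument unpacks precisely that---apply Theorem~\ref{th:PS} with $p=2$, use equimeasurability for the $L^2$-norms, and invoke Faber--Krahn on the ball $B^*$ of volume $\Hn(\Omega)$.

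One bookkeeping remark: your computation actually yields
\[
\frac{\int_\Omega|\nabla_\Sigma u|^2}{\int_\Omega u^2}\;\geq\;\frac{\mathfrak{j}_{n/2-1}^{\,2}\,\omega_n^{2/n}}{(\ps(n,K))^{2}\,\Hn(\Omega)^{2/n}},
\]
i.e.\ with $\mathfrak{j}_{n/2-1}$ and $\ps(n,K)$ \emph{squared}, whereas the constant $\g(n,K)$ as printed in the statement carries them to the first power. This is the correct outcome of the argument (since $\lambda_1(B_R)=\mathfrak{j}_{n/2-1}^{\,2}/R^2$ and the P\'olya--Szeg\H{o} bound for $p=2$ involves $\ps^2$); the discrepancy is a typo in the statement, not a flaw in your proof.
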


For applications, it may be useful to have a quantitative estimates for the the constants $\I(n,K)$, $\ps(n,K)$, etc. at the cost of introducing the dependence on the codimension $m$.

 In the specific cases of codimension $m=1$ and $m=2$, Brendle \cite{brendle2021isoperimetric} (see also \cite{BrendleEichmair+2023+1+10,brendle2024isoperimetric}) proved that the sharp isoperimetric constant is the Euclidean one, namely $1/(n\omega_n^{1/n})$. While determining the sharp constant remains an open problem for higher codimensions ($m \geq 3$), Brendle's work provides the following isoperimetric inequality, where the constant depends on the codimension:
\begin{gather*}
    (\Hn(\Omega))^{\frac{n-1}{n}}\leq \be(n,m) \left(\Hp(\partial \Omega) + \int_\Omega |H|\right), \\\quad \be(n,m):=
    \begin{cases}
    \frac{1}{n\omega_n^{1/n}}&,\textn{ if } m\in \{1,2\}\\
    \min\left\{\frac{1}{n}\left(\frac{m\omega_m}{(n+m)\omega_{n+m}}\right)^{\frac{1}{n}},\frac{5^n}{\omega_n^{1/n}}\right\}&,\textn{ if }m\geq 3
    \end{cases}
\end{gather*}
for every measurable subset $\Omega \subseteq \Sigma$. Using $\be(n,m)$ instead of $\ic(n)$ in the previous  results give, for any submanifold with total mean curvature bounded by $K<1/\be(n,m)$, the constants
\begin{gather*}
    \Tilde{\I}(n,m,K)= \frac{\be(n,m)}{1-\be(n,m)K},\quad \Tilde{\ps}(n,m,K)=\Tilde{\I}(n,m)n\omega_n^{1/n},
\end{gather*}
and so on.

\begin{proposition}\label{prop:asymtoticSharpness}
    Let $\Sigma\in\stc{K}$ with $K< n\omega_n^{1/n}$ . If $m\in\{1,2\}$ then 
    \begin{align}\label{prop:sharp}
        \norm{\nabla u^*}_{L^p(\Rn)}\leq \frac{n\omega_n^{1/n}}{n\omega_n^{1/n}-K} \norm{\nabla_\Sigma u}_{L^p(\Sigma)}
    \end{align}
    holds for every $u\in W^{1,p}_0(\Sigma;\Rp)$ and every $1\leq p < \infty$. Moreover,
    \begin{enumerate}[label = (\roman*)]
        \item if $\Sigma$ is a minimal submanifolds, then \eqref{prop:sharp} is sharp for every $n\geq 2$;
        \item in the general case, \eqref{prop:sharp} is asymptotically sharp as $n\to\infty$, in the sense that 
    \begin{align}\label{prop:sharpLimit}
        \lim_{n\to\infty} \frac{n\omega_n^{1/n}}{n\omega_n^{1/n}-K} =1.
    \end{align}
    \end{enumerate}
\end{proposition}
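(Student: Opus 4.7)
The plan is to recognize that this proposition is essentially a specialization of Theorem \ref{th:PS}. First, I would rerun the proof of Theorem \ref{th:PS} verbatim, replacing $\ic(n)$ by Brendle's sharp codimension one/two constant $\be(n,m)=1/(n\omega_n^{1/n})$. Every step of that proof uses $\ic(n)$ only through the isoperimetric inequality \eqref{nonSharp}; since Brendle's theorem guarantees that \eqref{nonSharp} holds with $\be(n,m)$ in place of $\ic(n)$ when $m\in\{1,2\}$, the argument carries over unchanged and produces
\begin{align*}
\norm{\nabla u^*}_{L^p(\Rn)}\le \Tilde{\ps}(n,m,K)\,\norm{\nabla_\Sigma u}_{L^p(\Sigma)}
\end{align*}
whenever $K<1/\be(n,m)=n\omega_n^{1/n}$. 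Substituting $\be(n,m)=1/(n\omega_n^{1/n})$ into the expression $\Tilde{\ps}(n,m,K)=\be(n,m)\,n\omega_n^{1/n}/(1-\be(n,m)K)$ recorded immediately before the statement, the constant collapses to $n\omega_n^{1/n}/(n\omega_n^{1/n}-K)$, which is exactly \eqref{prop:sharp}.

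For item (i) I would observe that a minimal submanifold has $H\equiv 0$, so $\tc(\Sigma)=0$ and \eqref{prop:sharp} can be applied with $K=0$, for which the constant reduces to $1$. Sharpness for every $n\ge 2$ is then exhibited by the model example $\Sigma=\Rn\subseteq\Rnm$, a totally geodesic (hence minimal) submanifold lying in $\stc{0}$ for any $m\in\{1,2\}$: for a radially symmetric, non-increasing $u\in W^{1,p}_0(\Rn;\Rp)$ one has $u^*=u$ and $\nabla_\Sigma u=\nabla u$, so the two sides of \eqref{prop:sharp} coincide.

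For item (ii) only the limit \eqref{prop:sharpLimit} has to be checked. Using $\omega_n=\pi^{n/2}/\Gamma(n/2+1)$ together with Stirling's formula $\Gamma(z+1)\sim\sqrt{2\pi z}(z/e)^{z}$ at $z=n/2$, I would compute
\begin{align*}
\omega_n^{1/n}\sim\sqrt{\pi}\,\Big(\tfrac{2e}{n}\Big)^{1/2},\qquad n\omega_n^{1/n}\sim\sqrt{2\pi e\,n}\to\infty\quad\text{as }n\to\infty,
\end{align*}
which forces $K/(n\omega_n^{1/n})\to 0$ and hence $n\omega_n^{1/n}/(n\omega_n^{1/n}-K)\to 1$. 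The only genuine subtlety is the very first step: one has to verify that in the proof of Theorem \ref{th:PS} the constant $\ic(n)$ enters solely through the isoperimetric inequality, so that the substitution $\ic(n)\mapsto\be(n,m)$ is legitimate. Once that is granted, the remainder of the proposition is a substitution followed by one Stirling estimate.
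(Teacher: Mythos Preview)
Your proposal is correct and follows essentially the same route as the paper: derive \eqref{prop:sharp} by rerunning Theorem~\ref{th:PS} with $\be(n,m)$ in place of $\ic(n)$, note $K=0$ for minimal submanifolds to obtain constant $1$, and use Stirling to show $n\omega_n^{1/n}\to\infty$ for the limit. You are in fact slightly more explicit than the paper in item~(i), supplying the concrete equality case $\Sigma=\Rn$ with a radially symmetric $u$, whereas the paper simply asserts sharpness once the constant is $1$.
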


\begin{proof} 
\newcommand{\p}{\mathfrak{P}}
    When $m\in\{1,2\}$, inequality \eqref{prop:sharp} is derived by replacing $\ic(n)$ with $\be(n,m)$ in Theorem \ref{th:PS}. 
    
    Suppose $\Sigma$ is a minimal submanifold. Then $\Sigma \in \stc{0}$, hence \eqref{prop:sharp} is sharp. Assume now only $\Sigma \in \stc{K}$ for some $0<K<n\omega_n^{1/n}$. Using Stirling's approximation (see \cite{marsaglia1990new}), we write
    \begin{align*}
        n\omega_n^\frac{1}{n} = n^{\frac{1}{2}-\frac{1}{2n}}\,\pi^{-\frac{1}{2n}}\,\sqrt{2\pi e} + o_n(1) \quad \textn{as } n\to \infty.
    \end{align*}
    Since
    \begin{align*}
        \lim_{n\to\infty} n^{\frac{1}{2}-\frac{1}{2n}}\,\pi^{-\frac{1}{2n}}= \infty,
    \end{align*}
    then there exists $\overline{n}\in \mathbb{N}$ such that $n\omega_n^{1/n}>K$ for every $n\geq \overline{n}$. Thus, we can compute
    \begin{align*}
    \lim_{n\to\infty} \frac{n\omega_n^{1/n}}{n\omega_n^{1/n}-K} = \lim_{n\to\infty} \frac{n^{\frac{1}{2}-\frac{1}{2n}}\,\pi^{-\frac{1}{2n}}\,\sqrt{2\pi e}}{n^{\frac{1}{2}-\frac{1}{2n}}\,\pi^{-\frac{1}{2n}}\,\sqrt{2\pi e} - K} =1.
    \end{align*}
\end{proof}

\begin{remark}
(I) Arguing as in the proof of Proposition \ref{prop:asymtoticSharpness}, one shows that also the inequalities 
\begin{gather}
    \norm{u}_{L^{p^*}}\leq \at(n,p)\frac{n\omega_n^{1/n}}{n\omega_n^{1/n}-K}\norm{\nabla_\Sigma u}_{L^p},\notag \\
    \norm{u}_{L^r}\leq \egn(n,p,q)\frac{n\omega_n^{1/n}}{n\omega_n^{1/n}-K} \norm{\nabla_\Sigma u}_{L^p}^\theta \norm{u}_{L^q}^{1-\theta},\label{eq:egnWithB}\\
    \frac{\int_\Omega |\nabla_\Sigma u|^2}{\int_\Omega u^2}\geq \frac{n\omega_n^{1/n}-K}{n}\mathfrak{j}_{\frac{n}{2}-1}\omega_n^{1/n}\notag
\end{gather}
(as in Corollary \ref{cor:pSobStc}, Corollary \ref{cor:GN_Stc} and Corollary \ref{cor:SGStc} respectively) hold for $m\in \{1,2\}$ and are asymptotically sharp as $n\to\infty$.

(II) If $n\omega_n^{1/n}$ is the sharp constant for the isoperimetric inequality \eqref{IsoIneq} in any codimension, then Proposition \ref{prop:asymtoticSharpness} and all the inequalities above hold for any $m\geq 1$ and are asymptotically sharp as $n\to\infty$.
\end{remark}

\begin{proof}[Proof of Corollary \ref{th:SLS}]
    Let $\Sigma$ be a minimal submanifold of $\R^{n+m}$ with $m\in\{1,2\}$.
By virtue of Theorem \ref{th:PS} and relation \eqref{eq:egnWithB} in the remarks above, the proof of Corollary \ref{th:SLS} reduces to the proof of the sharp Euclidean log-Sobolev inequality provided by Del Pino and Dolbeault in \cite{del2003optimal}.

\end{proof}

\section{Asymptotic sharpness of the total mean curvature bound in Theorem \ref{th:PS}}\label{ch:Sharpness}

Observe that, for the case $p=1$, the following statement holds without any assumption on the total mean curvature of $\Sigma$.

\begin{proposition}
    Let $\Sigma\subseteq\Rnm$ be a $n$-dimensional submanifold. For every $u\in W^{1,1}_0(\Sigma;\Rp)$ let $u^*:\Rn\to\Rp$ be its Schwartz rearrangement.  Then
    \begin{align*}
        \int_\Rn |\nabla u^*| \leq \ic(n)n\omega_n^{1/n} \int_\Sigma \left(|\nabla_\Sigma u| + u |H|\right).
    \end{align*}
    In particular, if $u|H|\in L^1(\Sigma)$ then $u^*\in W^{1,p}(\Rn;\Rp)$.
\end{proposition}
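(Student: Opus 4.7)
The strategy is to mimic the $p=1$ part of the proof of Theorem \ref{th:PS}, but instead of absorbing the mean-curvature contribution into a multiplicative constant via the assumption $\tc(\Sigma) < 1/\ic(n)$, we keep the $\int |H|$ term on the right-hand side and apply Fubini at the end. No constraint on $\tc(\Sigma)$ is then needed, because no division by $1 - \ic(n)\tc(\Omega)$ takes place.

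First I would reduce, by Lemma \ref{lemma:approx} and the continuity of the Schwartz rearrangement stated in Proposition \ref{prop:prop}\ref{prop:notproved}, to $u \in \Lip_c(\Sigma;\Rp)$ with $|\nabla_\Sigma u|>0$ $\Hn$-a.e.\ on $\{u>0\}$; under this assumption Lemma \ref{lemma:techical} ensures that $u^*$ is differentiable $\Ln$-a.e.\ and the coarea formula applies cleanly to both $u$ and $u^*$. Define
\begin{align*}
\phi(t) := \int_{\{u>t\}} |\nabla_\Sigma u|, \qquad \phi_*(t) := \int_{\{u^*>t\}} |\nabla u^*|,
\end{align*}
so that, as in the proof of Theorem \ref{th:PS}, $-\phi'(t) = \Hp(\{u=t\})$ and $-\phi_*'(t) = \Hp(\{u^*=t\})$ for a.e.\ $t>0$.

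Next I apply the Michael--Simon inequality \eqref{nonSharp} to $\Omega_t := \{u>t\}$, whose boundary contains $\{u=t\}$ (the part of $\partial\Omega_t$ lying in $\partial\Sigma$ contributes nothing since $u$ vanishes there), giving
\begin{align*}
(\Hn(\{u>t\}))^{\frac{n-1}{n}} \leq \ic(n)\left(\Hp(\{u=t\}) + \int_{\{u>t\}}|H|\right)
\end{align*}
for a.e.\ $t>0$. On the Euclidean side, $\{u^*>t\}$ is a ball, so the equality case of the Euclidean isoperimetric inequality yields $\Hp(\{u^*=t\}) = n\omega_n^{1/n}(\Ln(\{u^*>t\}))^{(n-1)/n}$. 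Since $\Ln(\{u^*>t\}) = \Hn(\{u>t\})$ by the definition of the Schwartz rearrangement, combining the two inequalities produces
\begin{align*}
-\phi_*'(t) \;\leq\; \ic(n)\,n\omega_n^{1/n}\left(-\phi'(t) + \int_{\{u>t\}}|H|\right) \quad \text{a.e.\ } t>0.
\end{align*}

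Finally I integrate from $0$ to $\infty$. Since $\phi,\phi_*$ are non-increasing, absolutely continuous on $[0,\infty)$ and vanish at infinity (by dominated convergence and $u\in\Lip_c$), their total variations equal $\phi(0)$ and $\phi_*(0)$ respectively, so $\int_0^\infty (-\phi'(t))\,dt = \int_\Sigma|\nabla_\Sigma u|$ and $\int_0^\infty(-\phi_*'(t))\,dt = \int_\Rn|\nabla u^*|$. For the curvature term, Fubini--Tonelli gives
\begin{align*}
\int_0^\infty\!\!\int_{\{u>t\}}|H|\,dt \;=\; \int_\Sigma |H|\!\int_0^\infty \chi_{\{u>t\}}\,dt \;=\; \int_\Sigma u\,|H|,
\end{align*}
which delivers the claimed inequality. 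The final statement, $u^*\in W^{1,1}(\Rn;\Rp)$ when $u|H|\in L^1(\Sigma)$, follows because $u^*\in L^1(\Rn)$ by Proposition \ref{prop:prop}\ref{prop:norms} and $|\nabla u^*|\in L^1(\Rn)$ by the inequality just proved. The only nontrivial step is justifying that the boundary terms from the coarea/Michael--Simon application reduce exactly to $\Hp(\{u=t\})$, but since $u\in W^{1,1}_0$ the level sets $\{u>t\}$ avoid $\partial\Sigma$ up to a negligible set, so this is standard.
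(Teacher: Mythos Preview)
Your proof is correct and follows essentially the same approach as the paper. The only cosmetic difference is bookkeeping: the paper absorbs the curvature contribution into the auxiliary function by setting $\phi(t):=\int_{\{u>t\}}\bigl(|\nabla_\Sigma u| + |H|(u-t)\bigr)$, so that $-\phi'(t)=\Hp(\{u=t\})+\int_{\{u>t\}}|H|$ already matches the right-hand side of Michael--Simon, whereas you keep $\phi$ as in Theorem~\ref{th:PS} and recover $\int_\Sigma u|H|$ at the end via Fubini---the two computations are identical.
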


\begin{proof}
    Repeat the argument in the first part of the proof of Theorem \ref{th:PS} with the functions $\phi,\phi_*:\Rp\to\Rp$
    \begin{align*}
        \phi(t):=\int_{\{u>t\}}\left(|\nabla_\Sigma u| + |H|(u-t)\right),\quad \phi_*(t):=\int_{\{u^*>t\}}|\nabla u^*|,
    \end{align*}
    and the Michael-Simon's inequality \eqref{IsoIneq} instead of Proposition \ref{prop:isoStc}.
    
\end{proof}

Our initial purpose was to establish an inequality of the type
\begin{align}\label{false}
    \int_{\Rn} |\nabla u^*|^p  \leq C(n,p) \int_\Sigma \left(|\nabla_\Sigma u|^p + |H|^pu^p\right),
\end{align}
for every $u\in W^{1,p}_0(\Sigma;\Rp)$. However, \eqref{false} cannot hold in general due to the following example.

\begin{example}\label{ex:counter}
    Let $\Sigma:={\mathbb{S}^2}$ be the 2-dimensional unit sphere in $\R^3$. With this choice, it is easy to show that $|H|\equiv 2$ and clearly $\partial {\mathbb{S}^2} = \varnothing$. For $\lambda \in [1,\infty)$, consider the function $\Tilde{u}_\lambda:\R^3\to\Rp$ given by
    \begin{align*}
        \Tilde{u}_\lambda(x,y,z):=
        \begin{cases}
            \lambda r&,\textn{ if } 0\leq r\leq \frac{1}{\lambda},\,z>0\\
            1&,\textn{ otherwise}
        \end{cases},
    \end{align*}
    where $r:=\sqrt{x^2+y^2}$. Define $u_\lambda:{\mathbb{S}^2}\to\Rp$  as the restriction of $\Tilde{u}_{\lambda}$ to ${\mathbb{S}^2}$. As the tangent space of ${\mathbb{S}^2}$ at the point $(0,0,1)$ is the plane $\R^2\times\{0\}$, then, for $\lambda$ large enough, we can write
    \begin{align}\label{ex:nablau}
        |\nabla_{\mathbb{S}^2} u_\lambda(x,y,z)| =
        \begin{cases}
            \lambda + o_\lambda(1)&,\textn{ if }0\leq r\leq \frac{1}{\lambda},\,z>0\\
            0&,\textn{ otherwise}
        \end{cases}
        \quad (x,y,z)\in {\mathbb{S}^2}.
    \end{align}
    Thus, for $p\geq 1$,
    \begin{align}\label{ex:intnablau}
        \begin{split}
            \int_{\mathbb{S}^2} |\nabla_{\mathbb{S}^2} u_\lambda|^p = \int_{B_{1/\lambda}} \frac{\left(\lambda + o(\lambda)\right)^p}{\sqrt{1-x^2-y^2}}\,d\scrL^2(x,y) = \pi \lambda^{p-2} + o(\lambda^{p-2}).
        \end{split}
    \end{align}

    After some elementary computations one shows that $u^*_\lambda(\cdot)=\rho_\lambda(|\cdot|)$, where
    \begin{align*}
        \rho_\lambda(s) = 
        \begin{cases}
            1&,\textn{ if } 0\leq s \leq \sqrt{2\left(1+\sqrt{1-\frac{1}{\lambda^2}}\right)}\\
            \lambda\sqrt{1-\left(\frac{s^2}{2}-1\right)^2}&, \textn{ if } \sqrt{2\left(1+\sqrt{1-\frac{1}{\lambda^2}}\right)} < s <2\\
            0 &,\textn{ if } 2\leq s
        \end{cases}.
    \end{align*}
    Therefore, letting $s:=(\xi^2+\zeta^2)^{1/2}$,
    \begin{align}\label{ex:nablaustar}
        |\nabla u^*_\lambda(\xi,\zeta)| = -\rho_\lambda'(s) = \begin{cases}
            0 &, \textn{ if } 0\leq s \leq \sqrt{2\left(1+\sqrt{1-\frac{1}{\lambda^2}}\right)}\\
            \lambda \frac{s(s^2-2)}{\sqrt{1-\left(\frac{s^2}{2}-1\right)^2}} &, \textn{ if } \sqrt{2\left(1+\sqrt{1-\frac{1}{\lambda^2}}\right)} < s < 2\\
            0 &, \textn{ if } 2 <s 
        \end{cases}.
    \end{align}
    Integrating \eqref{ex:nablaustar} using the spherical change of coordinates $(\xi,\zeta)\mapsto(s,\theta)$  and the substitution $y=(s^2/2-1)^2$, we obtain
    \begin{align}\label{ex:intnablaustar}
        \begin{split}
            \int_{\R^2}|\nabla u^*_\lambda|^p 
                =\begin{cases}
                        \frac{\pi 2^{2p-p/2} }{2-p} \lambda^{2p-2}+ o(\lambda^{2p-2}) &,\textn{ if }1\leq p <2\\
                        \infty  &,\textn{ if } 2\leq p.
                        \end{cases}
        \end{split}
    \end{align}

    Therefore, for $1\leq p <2$, \eqref{ex:intnablau} and \eqref{ex:intnablaustar} yield
    \begin{align*}
        \frac{\int_{\mathbb{S}^2} \left(|\nabla_{\mathbb{S}^2} u_\lambda|^p + |H|^pu_\lambda^p\right)}{\int_{\R^2}|\nabla u_\lambda^*|^p}\leq \kappa_p \frac{\lambda^{p-2} + 1}{\lambda^{2(p-1)}}\xrightarrow[\lambda\to\infty]{}\begin{cases}
            \kappa_1&,\textn{ if }p=1\\
            0&,\textn{ if }1<p<2
        \end{cases},
    \end{align*}
    for some constant $\kappa_p>0$ that depends only on $p$. While, for $p\geq 2$, $|\nabla u_\lambda^*|\not\in L^p(\R^2)$.

    This shows that \eqref{false} can not be true in general.
    
\end{example}

Observe that the construction in Example \ref{ex:counter} relies on the fact that $\Sigma$ is a submanifold without boundary. It can be seen that this calculation generalizes to any compact submanifold with empty boundary (note that the assumptions of Theorem \ref{th:PS}, thanks to Proposition \ref{prop:isoStc}, exclude these scenarios). Moreover, if $\mathbb{S}^n$ denotes the $n$-dimensional unit sphere in $\R^{n+1}$, $H\equiv n$. Therefore, 
\begin{align*}
    \tc(\mathbb{S}^n)= n(n\omega_n)^{1/n} = \frac{1}{\be(n,1)} + o_n(1) \textn{ as }n\to\infty.
\end{align*}
This proves that the assumption ``$\Sigma \in \stc{K}$ for some $K<1/\be(n,m)$'' is \emph{at least asymptotically sharp} when $m\in\{1,2\}$ as $n\to\infty$. Determining whether this constraint is actually sharp for each $n\geq 2$ in codimension $m=1$ and $m=2$ is still an open question, as a fortiori does the sharpness of ``$\Sigma \in \stc{K}$ for some $K<1/\ic(n)$''.

\section*{Appendix: another formulation of Theorem \ref{th:PS}}

\renewcommand{\u}{{u^{*,\m_{n,K}}}}
Another approach, analogous to the result of Mondino and Semola \cite{Mondino_2020} for $\cd(K,N)$ spaces, is the following. Given a $n$-dimensional submanifold $\Sigma$ with mean curvature $H$, following Milman \cite{milman2011sharp}, we consider a \emph{``model space''} $(\Rp,\m)$ with some sort of generalized notion of curvature and dimension, which makes it behave like a $n$-dimensional space with a constant mean curvature.

        Let $\m:\B(\Rp)\to[0,\infty]$ be any Radon measure on $\Rp$ and $1\leq p<\infty$. For every function $u\in L^p(\Rp,\m)$ we define the quantity
        \begin{align}\label{def:Sobm}
            \int |\nabla_\m u|^p\,d\m:=\inf\left\{\liminf_{j\to\infty} \int |u'_j|^p\,d\m : (u_j)_j\subseteq \Lip(\Rp)\cap L^p(\m),u_j\xrightarrow[]{L^p(\m)}u\right\}.
        \end{align}        
        Note that \eqref{def:Sobm} \emph{does not} give a function $\nabla_\m u:\Rp\to\Rp$. Instead, it only defines the symbol $\int |\nabla_\m u|^p\,d\m$. The space of $p$-Sobolev function in $(\Rp,\m)$ is the collection 
        \begin{align*}
            W^{1,p}(\Rp,\m):=\left\{u\in L^p(\Rp,\m):\int|\nabla_\m u|^p\,d\m <\infty\right\}.
        \end{align*}
        The $p$-norm in the space $W^{1,p}(\Rp,\m)$ is the function $\norm{\cdot}_{W^{1,p}(\Rp,\m)}:W^{1,p}(\Rp,\m)\to \Rp$ defined as
        \begin{align*}
            \norm{u}_{W^{1,p}(\Rp,\m)}:=\norm{u}_{L^p(\Rp,\m)} + \int |\nabla_\m u|^p\,d\m.
        \end{align*}
        One can prove that $\norm{\cdot}_{W^{1,p}(\Rp,\m)}$ is in fact a norm, hence $(W^{1,p}(\Rp,\m),\norm{\cdot}_{W^{1,p}(\Rp,\m)})$ can be regarded as a normed space.
        
        The space of $p$-Sobolev functions in $(\Rp,\m)$ that vanish at infinity is then defined as  the topological closure
        \begin{align*}
            W^{1,p}_0(\Rp,\m):= \overline{\Lip_c(\Rp)\cap W^{1,p}(\Rp,\m)}^{\norm{\cdot}_{W^{1,p}(\Rp,\m)}}
        \end{align*}

\begin{theorem}[P\'olya-Szeg\H{o} inequality, second version]\label{th:v2}
    Let $\Sigma\subseteq\Rnm$ be a $n$-dimensional submanifold such that $\Sigma \in \stc{K}$, for some $K<1/\ic(n)$, and define the measure $\m$ on $\Rp$ as 
    \begin{align*}
        \m_{n,K}:=f_{n,K}\scrL\llcorner\Rp,\quad f_{n,K}(r):=\left(\frac{1}{\ic(n)}-K\right)^n \frac{r^{n-1}}{n^{n-1}}.
    \end{align*}
    If $u\in W^{1,p}_0(\Sigma;\Rp)$ for some $1\leq p<\infty$ and $\u$ is the $\m_{n,K}$-Schwartz rearrangement of $u$ with respect to the measure $\m_{n,K}\in\calM(\Rp)$, then $\u\in W^{1,p}_0(\Rp,\m_{n,K})$ and 
    \begin{align}\label{v2:claim}
        \int |\nabla_{\m_{n,K}} u^{*,\m_{n,K}}|^p\,d\m_{n,K}\leq \int_\Sigma {|\nabla_\Sigma u|^p}.
    \end{align}
\end{theorem}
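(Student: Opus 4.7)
The strategy is to interpret the $\m_{n,K}$-Schwartz rearrangement $\u$ as a radial rescaling of the classical Euclidean rearrangement $u^*$ appearing in Theorem \ref{th:PS}, and then transfer the gradient estimate of that theorem to the model space $(\Rp,\m_{n,K})$ by a change of variable that absorbs the multiplicative constant $\ps(n,K)$ exactly into the weight $f_{n,K}$.

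First I would reduce to the case $u\in \Lip_c(\Sigma;\Rp)$ with $|\nabla_\Sigma u|>0$ $\Hn$-a.e.\ on $\{u>0\}$ via Lemma \ref{lemma:approx}, combined with continuity of the rearrangement (Proposition \ref{prop:prop}(ii)) and the lower semicontinuity of $\int|\nabla_{\m_{n,K}}\cdot|^p\,d\m_{n,K}$ under $L^p(\m_{n,K})$-convergence, which follows from a standard diagonal argument applied to the infimum in (\ref{def:Sobm}). Under this reduction, Lemma \ref{lemma:techical} provides strictly decreasing, absolutely continuous profiles $\eta,\rho:\Rp\to\Rp$ with $u^*(\xi)=\eta(|\xi|)$ and $\u(r)=\rho(r)$. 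Because both rearrangements share the level-set measure $\Hn(\{u>t\})$,
\begin{align*}
\omega_n\,\eta^{-1}(t)^n = \Hn(\{u>t\}) = \left(\frac{(1/\ic(n)-K)\,\rho^{-1}(t)}{n}\right)^{n},
\end{align*}
which solves to the key rescaling identity
\begin{align*}
\rho^{-1}(t)=\ps(n,K)\,\eta^{-1}(t),\qquad \textn{equivalently}\qquad \rho(r)=\eta\!\left(r/\ps(n,K)\right).
\end{align*}

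Differentiating this relation and substituting $s=r/\ps(n,K)$, while exploiting the identity $(1/\ic(n)-K)\,\ps(n,K)=n\omega_n^{1/n}$, a direct computation gives
\begin{align*}
\int_0^\infty |\rho'(r)|^p f_{n,K}(r)\,dr = \frac{n\omega_n}{\ps(n,K)^p}\int_0^\infty |\eta'(s)|^p s^{n-1}\,ds = \frac{1}{\ps(n,K)^p}\int_\Rn |\nabla u^*|^p.
\end{align*}
Combining with Theorem \ref{th:PS} immediately yields $\int_0^\infty |\rho'|^p f_{n,K}\,dr \leq \int_\Sigma |\nabla_\Sigma u|^p$.

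To finish, I need to identify this explicit integral with the abstract functional $\int|\nabla_{\m_{n,K}}\u|^p\,d\m_{n,K}$ defined by (\ref{def:Sobm}). Since the latter is an infimum over Lipschitz approximating sequences, only an upper bound is required. Here I would build compactly supported Lipschitz approximants $\rho_j$ of $\rho$ by mollification followed by truncation --- which is feasible because $\rho$ is absolutely continuous and supported in a bounded interval (as $\Hn(\{u>0\})<\infty$ forces $\m_{n,K}(\spt\u)<\infty$) --- arranging that $\rho_j\to\rho$ in $L^p(\m_{n,K})$ and $\int|\rho_j'|^p f_{n,K}\,dr\to \int|\rho'|^p f_{n,K}\,dr$. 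Inserting this sequence into (\ref{def:Sobm}) proves both (\ref{v2:claim}) and the membership $\u\in W^{1,p}_0(\Rp,\m_{n,K})$. The main obstacle is exactly this last verification: while routine in spirit, one has to handle $L^p$-convergence carefully against the weight $f_{n,K}$, whose behavior at the origin and at infinity requires attention during the mollification and truncation steps.
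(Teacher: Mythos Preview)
Your proof is correct but follows a genuinely different route from the paper's. The paper argues \emph{directly}: it reruns the level-set machinery of Theorem~\ref{th:PS} (co-area formula, the isoperimetric inequality~\eqref{IsoIneq} on $\{u>t\}$, H\"older's inequality) with the model measure $\m_{n,K}$ in place of $\Ln$, obtaining $-(\phi_{*,\m})'(t)\le -\phi'(t)$ and then, for $p>1$, the pointwise bound $-\psi'(t)\ge |\nabla\u(\tau(t))|^{p-1}f_{n,K}(\tau(t))$ which is integrated to give~\eqref{v2:claim}. You instead \emph{reduce} Theorem~\ref{th:v2} to Theorem~\ref{th:PS} by the scaling identity $\rho(r)=\eta(r/\ps(n,K))$ between the two rearrangement profiles, and a change of variable that converts $\int|\rho'|^p f_{n,K}$ into $\ps(n,K)^{-p}\int_\Rn|\nabla u^*|^p$ exactly. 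Your computation is correct: the identity $(1/\ic(n)-K)\,\ps(n,K)=n\omega_n^{1/n}$ indeed makes the Jacobian factor collapse to $n\omega_n s^{n-1}$, matching polar coordinates on $\Rn$. What your approach buys is a transparent explanation of \emph{why} the model density $f_{n,K}$ has its specific form --- it is precisely calibrated so that rescaling by $\ps(n,K)$ absorbs the multiplicative constant of Theorem~\ref{th:PS} --- and it avoids repeating the isoperimetric argument. What the paper's approach buys is self-containment: it does not invoke Theorem~\ref{th:PS} as a black box. The final identification step you flag (that $\int|\nabla_{\m_{n,K}}\u|^p\,d\m_{n,K}\le\int|\rho'|^p f_{n,K}$) is indeed routine once one observes that $\rho$ is absolutely continuous with compact support, so mollifications serve as the approximating sequence in~\eqref{def:Sobm}; the weight $f_{n,K}(r)=Cr^{n-1}$ is continuous and bounded on the (compact) support, so the usual $L^p$-convergence of mollifications goes through without difficulty.
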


\begin{definition}
    The space $(\Rp,\m_{n,K})$ of Theorem \ref{th:v2} is called \emph{model space of dimension $n$ and total mean curvature $K$}. 
\end{definition}

Compare this definition with the definition of model space for the curvature-dimension-diameter condition given by Milman in \cite{milman2011sharp}, where the curvature is $0$, the dimension is $n$ and the diameter is $\infty$.

\begin{proof}
    \renewcommand{\u}{{u^{*,\m}}}
    \newcommand{\phim}{{\phi_{*,\m}}}
    \newcommand{\mum}{{\mu_{*,\m}}}
    \newcommand{\psim}{{\psi_{*,\m}}}

    It is enough to prove that
    \begin{align*}
        \int |\nabla u|^p\,d\m_{n,K} \leq\int_\Sigma |\nabla_\Sigma u|^p
    \end{align*}
    for every $u\in \Lip_c(\Sigma;\Rp)$ such that $|\nabla u|\neq 0$ $\Hn$-a.e. in $\{u>0\}$. The general case will follow by density and the definition \eqref{def:Sobm}. Under the standing assumptions, by virtue of Lemma \ref{lemma:techical}, $u^{*,\m_{n,K}}$ is absolutely continuous in $\Rp$. In particular, $\nabla u^{*,\m_{n,K}} = (u^{*,\m_{n,K}})'$ exists and it is negative $\m_{n,K}$-a.e in $\Rp$, which in turn proves that $u^{*,\m_{n,K}}$ is strictly decreasing in $\Rp$.

    To the aim of easing the notation, we write $\m$ instead of $\m_{n,K}$ and $f$ instead of $f_{n,K}$. Moreover, we define $\tau:=(\u)^{-1}$ and set $G(n,K):=n^{1-n}(1/\ic(n)-K)^n$, so that $f(r)=G(n,K)r^{n-1}$.

    We define the function $\phi,\phim,\mu,\mum,\psi,\psim:\Rp\to\Rp$ as
    \begin{gather*}
        \mu(t):=\Hn(\{u>t\}),\quad\mum(t):=\m(\{\u>t\}),\\
        \phi(t):= \int_{\{u>t\}} |\nabla_\Sigma u|,\quad \phim(t):= \int_{\{\u>t\}} |\nabla \u|\,d\m,\\
        \psi(t):=\int_{\{u>t\}}|\nabla_\Sigma u|^p,\quad \psim(t):=\int_{\{\u>t\}}|\nabla \u|^p\,d\m.
    \end{gather*}

    Using the co-area formula, the isoperimetric inequality \eqref{IsoIneq} applied to the sets $\Omega_t:=\{u>t\}$ and H\"{o}lder's inequality with exponents $n$ and $n/(n-1)$, together with our definition of $\m$, $G(n,m,K)$ and Proposition \ref{prop:prop} \ref{prop:norms}, and arguing as in the proof of Theorem \ref{th:PS} we obtain that for a.e. $t>0$
    \begin{align}\label{th2:ineqderivatives}
        -(\phim)'(t)\leq -\phi'(t)\quad \textn{a.e.}\,\,t>0.
    \end{align}
    Therefore, as both $\phi$ and $\phim$ are decreasing and vanishing at $\infty$, \eqref{th2:ineqderivatives} implies
    \begin{align*}
        -\phim(t)\leq \phi(t)\quad \textn{a.e.}\,\,t>0.
    \end{align*}
    Passing to the limit as $t\to0$ in the last inequality, gives the claim when $p=1$.

    Assume $p>1$ and let $t>0$ be such that all of $\phi,\phi_*,\mu,\mu_*,\psi,\psi^*$ are differentiable at $t$ (these $t$'s form full measure subset of $(0,\infty)$). Thanks to H\"{o}lder's inequality, we have
    \begin{align*}
        \frac{\phi(t)-\phi(t+\lambda)}{\lambda}\leq \left(\frac{\psi(t)-\psi(t+\lambda)}{\lambda}\right)^\frac{1}{p}\left(\frac{\mu(t)-\mu(t+\lambda)}{\lambda}\right)^\frac{1}{p'}.
    \end{align*}
    Observe that the assumption $|\nabla_\Sigma u|>0$ $\Hn$-e.e. in $\{u>0\}$ implies $-\mu'(t)>0$ for a.e. $t>0$. Therefore, up to changing the choice of $t$, we can pass to the limit as $\lambda \to 0$ in the above expression. In particular, thanks to the co-area formula, \eqref{th2:ineqderivatives} and the fact that $\mu\equiv \mum$ (by definition of $\m$-Schwartz rearrangement),
    \begin{align}\label{v2:integrate}
        -\psi'(t)\geq \frac{\left(-\phi'(t)\right)^p}{\left(-\mu'(t)\right)^{p-1}}\geq \frac{\left(-(\phim)'(t)\right)^p}{\left(-(\mum)'(t)\right)^{p-1}} = |\nabla \u(\tau(t))|^{p-1}f(\tau(t)).
    \end{align}
    The claim \eqref{v2:claim} is reached after the integration of the inequality \eqref{v2:integrate} from $t=0$ to $t=\infty$.
    
\end{proof}

\nocite{*}

\vspace{1cm}
{\sc Pietro Aldrigo, Universit\"at Bern, Mathematisches Institut (MAI), Sidlerstrasse 12, 3012 Bern, Schweiz}

Email address: pietro.aldrigo@unibe.ch

\vspace{0.5cm}
{\sc Zolt\'an M. Balogh, Universit\"at Bern, Mathematisches Institut (MAI), Sidlerstrasse 12, 3012 Bern, Schweiz}

Email address: zoltan.balogh@unibe.ch

\end{document}